\documentclass{amsart}
\usepackage{amsmath, amssymb,epic,graphicx,mathrsfs,enumerate}
\usepackage[all]{xy}
\usepackage{color}
\usepackage{comment}

\usepackage{amsthm}
\usepackage{amssymb}
\usepackage{latexsym}
\usepackage{longtable}
\usepackage{epsfig}
\usepackage{amsmath}
\usepackage{hhline}


\DeclareMathOperator{\inn}{Inn} \DeclareMathOperator{\perm}{Sym}
 \DeclareMathOperator{\soc}{soc}
\DeclareMathOperator{\aut}{Aut}

 \DeclareMathOperator{\frat}{Frat}

\DeclareMathOperator{\sym}{Sym}

\DeclareMathOperator{\alt}{Alt}
\DeclareMathOperator{\End}{End}

\newcommand{\FF}{\mathbb F}

\renewcommand{\emptyset}{\varnothing}

\newtheorem{thm}{Theorem}
\newtheorem{cor}[thm]{Corollary}
 \newtheorem{lemma}[thm]{Lemma}
\newtheorem{prop}[thm]{Proposition} 
 
\newtheorem{question}[]{Question}

\numberwithin{equation}{section}

\renewcommand{\footnote}{\endnote}
\newcommand{\ignore}[1]{}\makeglossary

\begin{document}
	\bibliographystyle{amsplain}
	\title[Join graph and nilpotent groups]{Finite  groups with the same join graph\\ as a finite nilpotent group}

	\author{Andrea Lucchini}
	\address{Andrea Lucchini\\ Universit\`a degli Studi di Padova\\  Dipartimento di Matematica \lq\lq Tullio Levi-Civita\rq\rq\\ Via Trieste 63, 35121 Padova, Italy\\email: lucchini@math.unipd.it}

	\begin{abstract} Given a finite group $G,$ we denote by $\Delta(G)$ the graph whose vertices are the proper subgroups of $G$ and in which two vertices $H$ and $K$ are joined by an edge if and only if $G=\langle H,K\rangle.$ We prove that if there exists a finite nilpotent group $X$ with $\Delta(G)\cong \Delta(X),$ then $G$ is supersoluble.	\end{abstract}
	\maketitle

Given a finite group $G,$ we denote by $\Delta(G)$ the graph whose vertices are the proper subgroups of $G$ and in which two vertices $H$ and $K$ are joined by an edge if $G$ is generated by $H$ and $K,$ that is, $G=\langle H,K\rangle.$ This graph was introduced in \cite{join} and is called the join graph of $G.$ Notice that the subgroups contained in the Frattini subgroup of $G$ correspond to isolated vertices of $\Delta(G)$, so in particular $\Delta(G)$ contains no edge if $G$ is cyclic of prime-power order.

\

A typical question that arises whenever a graph is associated with a group is the following:

\begin{question}\label{uno}
How similar are the structures of two finite groups $G_1$ and $G_2$ if the graphs $\Delta(G_1)$ and $\Delta(G_2)$ are isomorphic?
\end{question}
We will say that $H\leq G$ is a maximal-intersection in $G$ if there exists a family $M_1,\dots,M_t$ of maximal subgroups of $G$ with $H=M_1\cap \dots \cap M_t.$ Let $\mathcal M(G)$ be the subposet of the subgroup lattice of $G$ consisting of $G$ and all the maximal-intersections in $G$. Notice that $\mathcal M(G)$ is a lattice in which the meet of two elements $H$ and $K$ coincides with their intersection and their join is the smallest maximal-intersection in $G$ containing $\langle H, K\rangle$ (in general $\langle H, K\rangle$ is not a maximal-intersection, see the example at the end of section \ref{facile}).
The maximum element of $\mathcal M(G)$ is $G,$ the minimum element coincides with the Frattini subgroup $\frat(G)$ of $G.$ The role played by $\mathcal M(G)$ in investigating the property of the graph $\Delta(G)$ is clarified by the following proposition.

\begin{prop}\label{equiv}
Suppose that $G_1$ and $G_2$ are finite groups. If the graphs $\Delta(G_1)$ and $\Delta(G_2)$ are isomorphic, then also the lattices $\mathcal M(G_1)$ and $\mathcal M(G_2)$ are isomorphic. 
\end{prop}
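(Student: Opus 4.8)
The plan is to reconstruct the lattice $\mathcal M(G)$ directly from the abstract graph $\Delta(G)$, so that any isomorphism $\Delta(G_1)\cong\Delta(G_2)$ automatically transports the reconstruction and forces $\mathcal M(G_1)\cong\mathcal M(G_2)$. The bridge between the graph and the lattice is a translation of adjacency into the language of maximal subgroups. For a proper subgroup $H$ I would write $\mathcal M_H$ for the set of maximal subgroups of $G$ containing $H$; since $G$ is finite, $\mathcal M_H\neq\emptyset$, and two vertices $H,K$ are adjacent exactly when no maximal subgroup contains both, i.e.\ when $\mathcal M_H\cap\mathcal M_K=\emptyset$. Two observations drive everything: first, $\mathcal M_{M_0}=\{M_0\}$ whenever $M_0$ is maximal; second, $c(H):=\bigcap_{M\in\mathcal M_H}M$ is a closure operator whose closed sets are precisely the proper maximal-intersections, so that $\{c(H):H\text{ a vertex}\}=\mathcal M(G)\setminus\{G\}$, with $c(H)\le c(H')$ if and only if $\mathcal M_{H'}\subseteq\mathcal M_H$.

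The heart of the argument — and the step I expect to be the main obstacle — is to show that the order on $\mathcal M(G)\setminus\{G\}$ is already visible in $\Delta(G)$ through neighbourhood inclusion. Writing $N(H)$ for the open neighbourhood of the vertex $H$, I claim
\[
N(H)\subseteq N(H')\quad\Longleftrightarrow\quad c(H)\le c(H').
\]
Since $N(H)=\{K:\mathcal M_K\cap\mathcal M_H=\emptyset\}$, the implication $\Leftarrow$ is immediate from $\mathcal M_{H'}\subseteq\mathcal M_H$. For the converse I argue by contraposition: if $c(H)\not\le c(H')$ then $\mathcal M_{H'}\not\subseteq\mathcal M_H$, so there is a maximal subgroup $M_0$ with $H'\le M_0$ but $H\not\le M_0$. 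Testing against the vertex $M_0$ and using $\mathcal M_{M_0}=\{M_0\}$ gives $\mathcal M_{M_0}\cap\mathcal M_H=\emptyset$ and $\mathcal M_{M_0}\cap\mathcal M_{H'}\neq\emptyset$, hence $M_0\in N(H)\setminus N(H')$. The one delicate point is the degenerate case $M_0=H'$: here $M_0\in N(H)$ still holds while $M_0\notin N(H')$ by the no-loop convention of the simple graph, so the conclusion survives. This clever choice of a maximal subgroup as a test vertex is exactly what converts the combinatorial data of neighbourhoods back into the containment order.

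Granting the claim, two vertices have equal open neighbourhood if and only if they have equal closure, and the neighbourhood classes, partially ordered by inclusion of neighbourhoods, form a poset isomorphic to $\mathcal M(G)\setminus\{G\}$. The whole recipe — quotient the vertex set by equality of open neighbourhoods, then order the classes by inclusion of neighbourhoods — is purely graph-theoretic, hence invariant under graph isomorphism; note that the isolated vertices, namely the subgroups of $\frat(G)$, all collapse into the single bottom class $\frat(G)$ and cause no trouble. Therefore $\Delta(G_1)\cong\Delta(G_2)$ yields a poset isomorphism $\mathcal M(G_1)\setminus\{G_1\}\cong\mathcal M(G_2)\setminus\{G_2\}$.

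Finally I would recover the full lattice by re-attaching the top. Since for a proper $H$ one has $c(H)\le M<G$ for any $M\in\mathcal M_H$, the element $G$ is never a closure, so $\mathcal M(G)$ is obtained from $\mathcal M(G)\setminus\{G\}$ by adjoining a greatest element in the canonical way. Adjoining tops to the two isomorphic posets produces isomorphic posets, and a poset isomorphism between finite lattices is automatically a lattice isomorphism (the meet and join being determined by the order). Hence $\mathcal M(G_1)\cong\mathcal M(G_2)$, as required.
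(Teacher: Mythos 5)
Your proof is correct and takes essentially the same approach as the paper: your closure $c(H)$ is precisely the paper's $\tilde H$, your key claim that $N(H)\subseteq N(H')$ if and only if $c(H)\le c(H')$ (proved by testing against maximal subgroups) is exactly Lemma~\ref{nxny}, and recovering $\mathcal M(G)$ by ordering the neighbourhood-equality classes under neighbourhood inclusion is the paper's reconstruction. The only differences are cosmetic: you make explicit some details the paper leaves implicit, such as re-attaching the top element $G$, the treatment of isolated vertices, and the passage from a poset isomorphism to a lattice isomorphism.
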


Notice that the condition $\mathcal M(G_1)\cong \mathcal M(G_2)$ is necessary but not sufficient to ensure $\Delta(G_1)\cong \Delta(G_2)$. For example consider $G_1=A \times \langle x \rangle $ and $G_2=\perm(3) \times \langle y \rangle,$
where $A\cong C_3\times C_3,$ $\langle x\rangle \cong C_2$ and $\langle y\rangle \cong C_3.$ Let $a_1, a_2, a_3, a_4$ and $b_1, b_2, b_3, b_4$ be generators for the four different non-trivial proper subgroups of, respectively, $A$ and $\perm(3).$ The map sending $A$ to $\perm(3)$  and $\langle a_i, x\rangle$  to $\langle b_i, y \rangle$ for
$1\leq i \leq 4$ induces an isomorphism between $\mathcal M(G_1)$ and $\mathcal M(G_2)$, however all the subgroups of $G_1$ are maximal-intersections, while $\langle (1,2,3)y\rangle$ and $\langle (1,2,3)y^2\rangle$ are not maximal-intersections in $G_2$. In particular $\Delta(G_1)$ has 12 vertices and $\Delta(G_2)$ has 14 vertices. So the following variation of Question \ref{uno} arises. 

\begin{question}\label{due}
How similar are the structures of two finite groups $G_1$ and $G_2$ if the lattices $\mathcal M(G_1)$ and $\mathcal M(G_2)$ are isomorphic?
\end{question}

Our aim is to start to investigate Question \ref{uno} and Question \ref{due}, considering the particular case when $G_1$ is a finite nilpotent group. Notice that if $G_1$ is a finite nilpotent group and $\Delta(G_1)\cong \Delta(G_2)$, then $G_2$ is not necessarily nilpotent. For example if $p$ is an odd prime, $C_p$ is the cyclic group of order $p$ and $D_{2p}$ is  the dihedral group of order $2p,$ then the subgroup lattices of $C_p\times C_p$ and $D_{2p}$ are isomorphic and therefore $\Delta(C_p\times C_p) \cong \Delta(D_{2p}).$ Our main result is the following.

\begin{thm}\label{main}
	Let $G$ be a finite group. If there exists a finite nilpotent group $X$ with $\mathcal M(G)\cong \mathcal M(X),$ then $G$ is supersoluble. 
\end{thm}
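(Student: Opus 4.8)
The plan is to extract from $\mathcal{M}(X)$ (with $X$ nilpotent) a rigid local structure, transport it across the isomorphism to $\mathcal{M}(G)$, and show that it forces every maximal subgroup of $G$ to have prime index; supersolubility then follows from Huppert's theorem (a finite group all of whose maximal subgroups have prime index is supersoluble).

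\emph{Structure of $\mathcal{M}(X)$.} First I would compute $\mathcal{M}(X)$ explicitly. Writing $X=P_1\times\cdots\times P_r$ as the product of its Sylow subgroups, the maximal-intersections of the $p_i$-group $P_i$ are exactly the subgroups containing $\frat(P_i)$, so $\mathcal{M}(P_i)$ is the lattice of $\mathbb{F}_{p_i}$-subspaces of $P_i/\frat(P_i)$; since the $P_i$ have coprime orders one gets $\mathcal{M}(X)\cong\prod_i\mathcal{M}(P_i)$, a direct product of subspace lattices over prime fields. I would then record the consequences to be transported to $\mathcal{M}(G)$: this lattice is modular, graded and self-dual, and every interval of length two is isomorphic either to the lattice of subspaces of a two-dimensional $\mathbb{F}_p$-space (hence with exactly $p+1$ atoms, $p$ a prime dividing $|X|$) or to a $2\times2$ square (two atoms). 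Thus in $\mathcal{M}(G)$ every length-two interval is a height-two lattice whose number of atoms lies in $\{2\}\cup\{p+1:p\text{ prime}\}$.

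\emph{Reduction to primitive groups.} I would induct on $|G|$. For $N\trianglelefteq G$ the maximal subgroups of $G/N$ are the images of the maximal subgroups of $G$ containing $N$, and intersecting these identifies $\mathcal{M}(G/N)$ with an upper interval $[A,G]$ of $\mathcal{M}(G)$. Upper intervals of $\prod_i\mathcal{M}(P_i)$ are again products of subspace lattices (an interval $[U,V_i]$ being the subspace lattice of $V_i/U$), hence of the form $\mathcal{M}(Y)$ for a nilpotent $Y$; so by induction every proper quotient of $G$ is supersoluble. By Huppert's theorem it then suffices to prove that each maximal subgroup $M$ has prime index, and replacing $G$ by $G/\core_G(M)$ — whose $\mathcal{M}$ is again of nilpotent type — I may assume $G$ is primitive with core-free maximal subgroup $M$, so $\frat(G)=1$.

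\emph{The primitive analysis.} If $\soc(G)=V$ is abelian, then $G=V\rtimes M$ with $V$ a faithful irreducible $M$-module of order $p^k$ and $|G:M|=p^k$, and the goal is $k=1$. Using modularity, for two distinct complements $H_1,H_2$ of $V$ the rank identity forces $H_1\cap H_2$ to have corank $2$, so $[H_1\cap H_2,G]$ is a length-two interval; its atoms are the maximal subgroups above $H_1\cap H_2$, whose number is governed by the set of complements of $V$, of cardinality $|V|\cdot|H^1(M,V)|=p^k\,|H^1(M,V)|$ (here $C_V(M)=0$ since $V$ is a nontrivial irreducible module). Forcing this atom count into the admissible set $\{2\}\cup\{q+1:q\text{ prime}\}$ yields $p^k|H^1(M,V)|=p$, whence $k=1$ and $|G:M|=p$. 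If $\soc(G)$ is nonabelian I would rule this case out: the same interval-and-atom-count mechanism, together with the classification of maximal subgroups of almost simple groups, exhibits a maximal subgroup of non-prime index, contradicting the admissible atom counts established above.

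\emph{Main obstacle.} The crux is the counting step in the abelian-socle case: identifying the atoms of the chosen length-two interval precisely with the complements of $V$ up to a bounded correction, and selecting $H_1,H_2$ so that no extraneous maximal subgroups inflate the count, so that the admissible-value constraint really pins $|V|$ to a prime. A secondary difficulty is the exclusion of nonabelian socle, which requires input beyond pure lattice theory (CFSG-type facts on indices of maximal subgroups). I would stress throughout that modularity and the atom-count condition must be used \emph{together}, since neither suffices in isolation: $\mathcal{M}(\alt(4))$ is a height-two lattice with five atoms, which is modular yet is not of nilpotent type because $5$ is not of the form $p+1$.
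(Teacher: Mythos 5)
Your preparatory steps are sound: $\mathcal M(X)$ is indeed a product of subspace lattices of the Frattini quotients of the Sylow subgroups, $\mathcal M(G/N)$ is an upper interval of $\mathcal M(G)$, and the reduction via Huppert's theorem to a primitive group $G$ with $\frat(G)=1$ is legitimate (and roughly parallel to the paper's reduction through chief factors). The fatal gap is the counting step in the abelian-socle case, and it is not a repairable technicality. The atoms of $[H_1\cap H_2,G]$ are the maximal subgroups of $G$ containing $H_1\cap H_2$, and there is in general \emph{no} choice of $H_1,H_2$ for which these are "the complements of $V$ up to a bounded correction". Take $G=\perm(4)=V\rtimes \perm(3)$ with $|V|=2^2$. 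The intersection of any two distinct maximal subgroups of $\perm(4)$ is either the Klein subgroup $V$, or generated by a $3$-cycle, or generated by a transposition; the number of maximal subgroups above these is $4$, $2$ and $3$ respectively, and \emph{all} of these values lie in your admissible set ($4=3+1$, $3=2+1$). So every length-two upper interval of $\mathcal M(\perm(4))$ is nilpotent-compatible, yet $\soc(\perm(4))$ has order $2^2$: your asserted identity "atom count $=p^k|H^1(M,V)|$" is simply false (two point stabilizers give count $3$, not $4$), and no corrected local count can pin $|V|$ to a prime. Failure of supersolubility is invisible to length-two intervals; a genuinely global invariant is needed. This is exactly why the paper works with the M\"obius function, which is determined by all of $\mathcal M(G)$ (Hall's theorem) and equals $\pm\prod_i p_i^{\binom{m_i}{2}}$ for Frattini-free nilpotent $X$: for $\perm(4)$ one gets $\mu_G(1)=-12$, divisible by $2^2$, which is not of that shape. (Incidentally, $\mathcal M(\perm(4))$ does violate modularity, but only through a configuration your argument never examines — two atoms, $\langle(34)\rangle$ and $\langle(123)\rangle$, whose join is the top of a height-three lattice — so invoking modularity "together with" your atom counts does not rescue the written argument.) The paper's actual route in the soluble case combines the M\"obius invariant with Wolf's theorem, which supplies three maximal subgroups with trivial intersection and hence bounds $|X|$ by a product of three primes; since your reduction already makes $M\cong G/V$ supersoluble, that is the natural repair.

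The nonabelian-socle case is gapped in a more basic way: "exhibits a maximal subgroup of non-prime index" contradicts nothing you have established, because a lattice isomorphism does not see indices — the admissible-atom-count condition constrains only the number of maximal subgroups over corank-two meets, never $|G:M|$. (The same conflation is latent in the Huppert framing itself: prime index must be \emph{extracted} from lattice data, which is the whole difficulty.) Eliminating nonabelian socle is where the paper works hardest: a wreath-product embedding produces two irredundant families of maximal subgroups with trivial intersection of different cardinalities, forcing $G$ almost simple; CFSG-based base-size results then give at most five maximal subgroups with trivial intersection, so $|X|$ is a product of at most five primes and $\mu_X(1)$ has at most two prime divisors; finally the Hawkes--Isaacs--\"Ozaydin divisibility (that $|G|$ divides $m\cdot\mu_G(1)$) and Burnside's $p^aq^b$ theorem yield the contradiction. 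None of that is recoverable from the local mechanism you propose.
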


\begin{cor}\label{maincor}
	Let $G$ be a finite group. If there exists a finite nilpotent group $X$ with $\Delta(G) \cong \Delta(X),$ then $G$ is supersoluble. 
\end{cor}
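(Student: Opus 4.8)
The plan is short, and honestly so: Corollary \ref{maincor} is an immediate consequence of Theorem \ref{main}, with Proposition \ref{equiv} supplying the only link that needs to be invoked. The point is that the hypothesis of the corollary is phrased in terms of the join graph $\Delta$, whereas Theorem \ref{main} is phrased in terms of the maximal-intersection lattice $\mathcal M$; Proposition \ref{equiv} is exactly the device that lets us pass from the former to the latter.

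Concretely, I would argue as follows. Suppose $G$ is a finite group and $X$ is a finite nilpotent group with $\Delta(G)\cong\Delta(X)$. Applying Proposition \ref{equiv} with $G_1=G$ and $G_2=X$, the graph isomorphism $\Delta(G)\cong\Delta(X)$ yields a lattice isomorphism $\mathcal M(G)\cong\mathcal M(X)$. Since $X$ is nilpotent, the hypotheses of Theorem \ref{main} are now satisfied verbatim (with $X$ serving as the required nilpotent witness), and we conclude that $G$ is supersoluble. This is the entire deduction.

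There is no genuine obstacle to overcome at this stage: all the substantive work has already been carried out, partly in establishing Proposition \ref{equiv} (which guarantees that a graph isomorphism forces the corresponding lattice isomorphism) and principally in proving Theorem \ref{main} (which extracts supersolubility from a lattice isomorphism with a nilpotent group). The corollary merely records the composition of these two facts, so the chain $\Delta(G)\cong\Delta(X)\Rightarrow\mathcal M(G)\cong\mathcal M(X)\Rightarrow G\text{ supersoluble}$ is the whole of the argument.
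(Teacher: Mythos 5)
Your proposal is correct and matches the paper's intended deduction exactly: the paper states Corollary \ref{maincor} without a separate proof precisely because it follows by composing Proposition \ref{equiv} (graph isomorphism implies lattice isomorphism of the $\mathcal M$-lattices) with Theorem \ref{main}. Nothing is missing.
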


Let $\mathfrak M$ be the family of the finite  groups $G$ with the property that $\mathcal M(G)\cong \mathcal M(X)$ for some finite nilpotent group $X.$ In a similar way let  $\mathfrak D$ be the family of the finite  groups $G$ with the property that $\Delta(G)\cong \Delta(X)$ for some finite nilpotent group $X.$ 
By Theorem \ref{main}, if $G\in \mathfrak M,$ then $G$ is supersoluble, but there exist supersoluble groups which do not belong to  $\mathfrak M$
and it is not easy to give a complete characterization of the finite groups in
$\mathfrak M$ or in $\mathfrak D.$ We give a solution of this problem in the particular case when $G$ is a finite group with $\frat(G)=1.$
Recall that a finite group $G$ is called a $P$-group if $G$ is either an elementary abelian $p$-group or is the semidirect product of an elementary abelian normal subgroup of order $p^{n}$ by a group of prime order $q\neq p$ which induces a non trivial power automorphism on $A.$ 

 \begin{prop}\label{ddd}
 	Let $G$ be a finite group with $\frat(G)=1.$ Then  $G\in \mathfrak D$ if and only if $G$ is a direct product of $P$-groups with pairwise coprime orders. 
 \end{prop}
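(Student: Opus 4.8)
Both directions rest on the remark that the join graph is a lattice invariant: for proper subgroups $H,K$ the condition $\langle H,K\rangle=G$ says exactly that the join of $H$ and $K$ in the full subgroup lattice $L(G)$ is the top element, so any isomorphism $L(G)\to L(Y)$ of subgroup lattices restricts to an isomorphism of graphs $\Delta(G)\to\Delta(Y)$. The plan is therefore to reduce the whole statement, under the hypothesis $\frat(G)=1$, to a statement about $L(G)$.

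For the implication from right to left, suppose $G=P_1\times\cdots\times P_k$ is a direct product of $P$-groups of pairwise coprime orders. It is classical that every $P$-group is lattice-isomorphic to an elementary abelian group: $L(P_i)\cong L(E_i)$ with $E_i=P_i$ if $P_i$ is elementary abelian and $E_i\cong C_{p_i}^{n_i+1}$ if $P_i\cong C_{p_i}^{n_i}\rtimes C_{q_i}$. Since the factors have coprime orders, $L(G)\cong\prod_i L(P_i)\cong\prod_i L(E_i)\cong L(X)$, where $X=E_1\times\cdots\times E_k$ is a direct product of elementary abelian groups of pairwise distinct primes, hence nilpotent. By the remark above $\Delta(G)\cong\Delta(X)$, so $G\in\mathfrak D$.

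For the converse I assume $\frat(G)=1$ and $\Delta(G)\cong\Delta(X)$ with $X$ nilpotent, and I would first upgrade this to a genuine lattice isomorphism by a counting argument. A subgroup is an isolated vertex of $\Delta(G)$ precisely when it is contained in $\frat(G)$ (the nontrivial implication being that $H\not\le\frat(G)$ lies outside some maximal subgroup $M$, whence $\langle H,M\rangle=G$); since $\frat(G)=1$ there is a unique isolated vertex, so $\Delta(X)$ has a unique isolated vertex and hence $\frat(X)=1$. A nilpotent Frattini-free group is a direct product $E_1\times\cdots\times E_k$ of elementary abelian groups of pairwise distinct primes, in which every subgroup is a maximal-intersection, so $\mathcal M(X)=L(X)$. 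By Proposition \ref{equiv} we have $\mathcal M(G)\cong\mathcal M(X)=L(X)$, so $G$ has exactly $|L(X)|$ maximal-intersections; on the other hand $\Delta(G)\cong\Delta(X)$ forces $G$ and $X$ to have the same number of proper subgroups, namely $|L(X)|-1$. Thus $G$ has as many proper maximal-intersections as proper subgroups, so every proper subgroup of $G$ is a maximal-intersection, $\mathcal M(G)=L(G)$, and therefore $L(G)\cong L(X)\cong\prod_i L(E_i)$.

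It now remains to classify the finite groups $G$ whose subgroup lattice is a direct product of projective geometries of pairwise distinct characteristics; here $G$ is supersoluble by Theorem \ref{main}. The plan is to invoke the theory of lattice isomorphisms (Baer, Suzuki, Schmidt): the factorisation $L(G)\cong\prod_i L(E_i)$ into directly indecomposable complemented modular lattices is unique, and because the factors are projective geometries over fields of pairwise distinct characteristics it should lift to a direct decomposition $G=G_1\times\cdots\times G_k$ with $L(G_i)\cong L(E_i)$ and the $G_i$ of pairwise coprime orders. Each $G_i$ then has the subgroup lattice of an elementary abelian $p_i$-group and so is a $P$-group, whence $G$ is a direct product of $P$-groups of pairwise coprime orders. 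The main obstacle is exactly this last transfer from lattices back to groups: lifting the abstract direct-product decomposition of $L(G)$ to a direct-product decomposition of $G$ with coprime factors, and checking that no two factors can share a prime. This is the subtlety that separates $\mathfrak D$ from $\mathfrak M$, as in the pair $\perm(3)\times C_3$ and $C_3\times C_3\times C_2$, which have isomorphic $\mathcal M$ but non-isomorphic $\Delta$; the counting step above rules such configurations out, since if $\perm(3)\times C_3$ lay in $\mathfrak D$ it would force $\mathcal M(\perm(3)\times C_3)=L(\perm(3)\times C_3)$, which is false.
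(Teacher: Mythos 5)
Your proof is correct and takes essentially the same route as the paper: the unique isolated vertex forces $\frat(X)=1$, hence $\mathcal M(X)=\mathcal L(X)$, this property transfers to $G$ to give $\mathcal L(G)\cong \mathcal L(X)$, and Baer's classification of groups lattice-isomorphic to abelian groups (cited by the paper as \cite[Theorem 2.5.10]{sch}) finishes both directions. The only deviation is cosmetic: where you count vertices against maximal-intersections to conclude $\mathcal M(G)=\mathcal L(G)$, the paper transfers the graph property that distinct vertices have distinct neighbourhoods (via Lemma \ref{nxny}); both arguments are valid, and the lifting of the lattice decomposition back to a coprime direct decomposition of $G$, which you flag as the main obstacle, is precisely the content of the cited classical theorem.
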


The classification of the Frattini-free  groups in $\mathfrak M$ is more difficult. First we need a definition. Let $t\geq 2$ and $p_1,\dots,p_t$ be prime numbers with the property that $p_{i+1}$ divides $p_i-1$ for $1\leq i\leq t-1.$ We denote by $\Lambda(p_1,\dots,p_t)$ the set of the direct products $H_1\times \dots \times H_{t-1}$, where $H_i\cong C_{p_i}^{n_i}\rtimes C_{p_i+1}$  is
a nonabelian $P$-group. 
Moreover we will denote by $\Lambda^*(p_1,\dots,p_t)$ the direct products
 $X\times Y$ with $X\cong C_{p_1}$ 
 and $Y\in \Lambda(p_1,\dots,p_t).$  Finally let 
$\Lambda$ (respectively $\Lambda^*$) be the union of all the families $\Lambda(p_1,\dots,p_t)$ (respectively $\Lambda^*(p_1,\dots,p_t)$), for any possible choice of $t$ and  $p_1,\dots,p_t.$

\begin{prop}\label{mmm}
	Let $G$ be a finite group with  $\frat(G)=1.$  Then $G\in \mathfrak M$ if and only if $G$ is a direct product $H_1\times \dots \times H_u$, where the orders of the factors are pairwise coprime and each of the factors is of one of the following types:
	\begin{enumerate}
		\item an elementary abelian $p$-group;
		\item a group in $\Lambda;$
		\item a group in $\Lambda^*.$
	\end{enumerate}
\end{prop}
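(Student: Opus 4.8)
The plan is to prove both implications after one common reduction, using Theorem~\ref{main} to get supersolubility in the hard direction.

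\emph{A reduction.} For a nilpotent group $X$, writing $X=\prod_p P_p$ as the product of its Sylow subgroups, the maximal subgroups of $X$ are obtained by replacing one factor $P_p$ by a maximal subgroup of $P_p$; hence every subgroup is a product and $\mathcal M(X)\cong\prod_p\mathcal M(P_p)$, where $\mathcal M(P_p)$ is the lattice of subspaces of the $\FF_p$-space $P_p/\frat(P_p)$, i.e.\ $\mathcal M(P_p)\cong\mathcal M(C_p^{d_p})$ with $d_p=\dim_{\FF_p}P_p/\frat(P_p)$. Consequently $G\in\mathfrak M$ if and only if $\mathcal M(G)$ is isomorphic to a finite direct product of subspace lattices $\mathcal M(C_p^{d})$ over prime fields in which the primes attached to the indecomposable factors are pairwise distinct (a nilpotent group has at most one Sylow subgroup per prime). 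The device producing such products is: if $G=A\times B$ and no prime divides both $|A/A'|$ and $|B/B'|$, then $A$ and $B$ share no quotient of prime order, so every maximal subgroup of $G$ contains $A$ or $B$; intersecting such subgroups shows that every maximal-intersection of $G$ is a direct product and $\mathcal M(A\times B)\cong\mathcal M(A)\times\mathcal M(B)$.

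\emph{Sufficiency.} Let $G=H_1\times\cdots\times H_u$ be as in the statement. As the factors have coprime orders, $\mathcal M(G)\cong\prod_j\mathcal M(H_j)$, so it suffices to treat each type. Elementary abelian factors are immediate. If $Y=H_1\times\cdots\times H_{t-1}\in\Lambda(p_1,\dots,p_t)$, then $H_i/H_i'\cong C_{p_{i+1}}$ and the primes $p_2,\dots,p_t$ are pairwise distinct (the $p_i$ strictly decrease, as $p_{i+1}\mid p_i-1$), so the device above applies repeatedly; using that a nonabelian $P$-group satisfies $\mathcal M(C_{p_i}^{n_i}\rtimes C_{p_{i+1}})\cong\mathcal M(C_{p_i}^{n_i+1})$, one gets $\mathcal M(Y)\cong\prod_{i=1}^{t-1}\mathcal M(C_{p_i}^{n_i+1})$, a product over the distinct primes $p_1,\dots,p_{t-1}$. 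For $X\times Y\in\Lambda^*(p_1,\dots,p_t)$ the prime $p_1$ of $X\cong C_{p_1}$ does not divide $|Y/Y'|=p_2\cdots p_t$, so again $\mathcal M(X\times Y)\cong\mathcal M(C_{p_1})\times\mathcal M(Y)$. This lattice has two factors at the prime $p_1$, of ranks $1$ and $n_1+1$; but the rank-$1$ factor is a two-element chain, which equals $\mathcal M(C_r)$ for every prime $r$, and may be reassigned to a prime outside $\{p_1,\dots,p_{t-1}\}$ (for instance $p_t$). In each case $\mathcal M(G)$ becomes a product of subspace lattices over pairwise distinct primes, whence a nilpotent witness exists.

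\emph{Necessity and the main obstacle.} Suppose $\frat(G)=1$ and $G\in\mathfrak M$. By Theorem~\ref{main} $G$ is supersoluble, so $\fit(G)=\soc(G)$ is a complemented product of chief factors of prime order and $G=\soc(G)\rtimes K$. I would form the directed graph $\Gamma$ on the primes dividing $|G|$ with an arc from $p$ to $q$ whenever an element of order $q$ induces a non-trivial automorphism on a chief factor of order $p$; supersolubility forces a non-trivial power automorphism, so $q\mid p-1$ and $\Gamma$ is acyclic with arcs pointing to strictly smaller primes. The core of the proof is to convert the hypothesis that $\mathcal M(G)$ is a product of subspace lattices over prime fields into three facts, read from the rank-$2$ and rank-$3$ intervals of $\mathcal M(G)$ together with the uniqueness of the decomposition of a complemented modular lattice of finite length into directly indecomposable factors: (i) each action is by a single scalar, since two distinct non-trivial eigenvalues, or a non-diagonalisable action, would force a failure of modularity (a sublattice $\cong N_5$), so every Sylow subgroup is elementary abelian and acted on diagonally; (ii) both the in-degree and the out-degree in $\Gamma$ are at most $1$, so $\Gamma$ is a disjoint union of simple directed paths $p_1>\cdots>p_t$ with $p_{i+1}\mid p_i-1$, a branch producing either a forbidden pentagon or an indecomposable factor whose prime cannot be placed; and (iii) the subspace fixed by the acting prime has dimension at most $1$, and is non-zero only at the maximal prime $p_1$ of a path, since a non-zero fixed part at a smaller prime $p_i$ would meet $|Y/Y'|$ and reintroduce diagonal maximal subgroups, while a fixed part of rank $\ge 2$ would give two factors at one prime, both of rank $\ge 2$, which can neither be merged nor realized nilpotently. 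Decomposing $G$ along the connected components of $\Gamma$ then yields a direct product of subgroups of pairwise coprime order: an isolated vertex gives an elementary abelian factor, a path gives $\prod_i(C_{p_i}^{n_i}\rtimes C_{p_{i+1}})\in\Lambda$, and the optional rank-$1$ fixed part at $p_1$ supplies the extra factor $C_{p_1}$ of $\Lambda^*$. I expect the genuine difficulty to be exactly step (i)--(iii): extracting the scalarity of the actions, the absence of branching, and the bound on the fixed part purely from the isomorphism type of $\mathcal M(G)$; the remaining prime-and-rank bookkeeping is routine given the Reduction and Sufficiency steps.
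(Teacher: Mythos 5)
Your reduction and your sufficiency direction are essentially correct, and they coincide with the paper's own route: your ``device'' is Lemma~\ref{mdp} (valid in your setting because all groups involved are soluble, so a common simple quotient is a common quotient of prime order), and your treatment of $\Lambda$ and $\Lambda^*$ — the $P$-group lattice isomorphism $\mathcal M(C_p^n\rtimes C_q)\cong\mathcal M(C_p^{n+1})$ plus the reassignment of the rank-one factor $\mathcal M(C_{p_1})$ to an unused prime — is exactly Lemma~\ref{inverso}. The problem is the necessity direction, where you yourself concede that steps (i)--(iii) are ``the genuine difficulty'': those steps are not proved, and they are precisely the content of the proposition, so the proposal is a plan rather than a proof. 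Worse, even granting (i)--(iii) as you state them, they are insufficient. Take $G=C_p\rtimes C_4$ with $4\mid p-1$ and the action faithful. Then $\frat(G)=1$, $G$ is supersoluble, $\soc(G)=\fit(G)=C_p$, and $G=\soc(G)\rtimes K$ with $K\cong C_4$. Every element acts on the unique chief factor of order $p$ as a scalar (every automorphism of $C_p$ is a power map), there is nothing to diagonalise, your graph has the single arc $p\to 2$ so all in- and out-degrees are at most $1$, and the fixed subspace is zero: $G$ passes every test in (i)--(iii). Yet its Sylow $2$-subgroup is $C_4$, it is not a direct product of $P$-groups, and $G\notin\mathfrak M$ — indeed, writing $t$ for the involution of $K$, the set $\{1,\langle t\rangle, C_4, C_4^g, G\}$ with $1\neq g\in C_p$ is a pentagon $N_5$ in $\mathcal M(G)$, so $\mathcal M(G)$ is not even modular. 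The failure here comes from the structure of the complement $K$, not from eigenvalues of the action, so your modularity mechanism as described cannot detect it; what is missing is an argument forcing the group inducing the action on each Sylow subgroup of the socle to have \emph{prime} order.

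This is exactly where the paper does its real work, and by a genuinely different method: it argues by induction on $|G|$, using Lemma~\ref{normali} to guarantee that every normal subgroup above $\frat(G)$ is a maximal-intersection (hence every relevant quotient of $G$ stays in $\mathfrak M$), and then kills each bad configuration — quotients of the shape $C_q\rtimes C_p$, $(C_p\rtimes C_q)\times C_p$, $(C_p\rtimes C_q)\times C_p^2$, $(C_p\times C_r)\rtimes C_q$, and so on — by comparing the Möbius number $\mu_G(1)$, which is an invariant of $\mathcal M(G)$, with the very restricted values $\pm\prod_i p_i^{\binom{m_i}{2}}$ allowed for Frattini-free nilpotent groups (Proposition~\ref{moe} and Corollary~\ref{monil}), together with bounds on $|X|$ coming from lengths of chains and from minimal families of maximal subgroups with trivial intersection. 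Any completion of your approach would have to either prove strengthened versions of (i)--(iii) that also control the complement $K$ (ruling out non-prime-order actions such as $C_4$), or import an inductive/numerical apparatus of this kind; without one of these, the necessity half is not established.
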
	
It follows from the previous proposition that $\perm(3)\times C_2$ is an example (indeed the one of smallest possible order) of a supersoluble group $G$ which does not belong to $G\in \mathfrak M.$

Notice that our proof of Theorem \ref{main} uses the classification of the finite simple groups. Theorem \ref{main} is invoked in the proof of Proposition \ref{mmm}, which therefore in turn depends on the classification. On the contrary, Proposition \ref{ddd} can be directly proved without using Theorem \ref{main} and the  classification of the finite simple groups. Indeed it turns out that if $G\in \mathfrak D$ and $\frat(G)=1,$ then $G$ has the same subgroup lattice of a finite abelian group, and the groups with these property have been classified by R. Baer \cite{baer}. However, we are not able to deduce  Corollary \ref{maincor} from Proposition
\ref{ddd}, so also our proof of this result depends on the classification. To avoid the use of the classification in the proof of  Corollary \ref{maincor}, one should give a positive answer to the following question, that we leave open. Is it true that $\Delta(G_1)\cong \Delta(G_2)$ implies $\Delta(G_1/\frat(G_1))\cong \Delta(G_2/\frat(G_2))?$ The obstacle in dealing with this question, is that it is not clear whether and how one can deduce which vertices of the graph $\Delta(G)$ correspond to subgroups of $G$ containing $\frat(G).$

\section{Preliminary results}\label{facile}

 Denote by $\mathcal N_G(X)$ the set of neighbourhoods of the vertex $X$ in the graph $\Delta(G).$ We define an equivalent relation $\equiv_G$ by the rules $X\equiv_G Y$ if and only if $\mathcal N_G(X)=\mathcal N_G(Y).$  If $X \leq G,$ let $\tilde X$ be the intersection of the maximal subgroups of $G$ containing $X$ (setting $\tilde X=G$ if no maximal subgroup of $G$ contains $X)$.

\begin{lemma}\label{nxny} $\mathcal N_G(X)\subseteq \mathcal N_G(Y)$ if and only if $\tilde X \leq \tilde Y.$ In particular
	$X\equiv_G Y$ if and only if $\tilde X=\tilde Y.$	
\end{lemma}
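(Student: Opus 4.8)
The plan is to prove the biconditional $\mathcal N_G(X)\subseteq \mathcal N_G(Y)\iff \tilde X\leq \tilde Y$ directly from the definitions, and then obtain the "in particular" clause by applying the containment version in both directions. The key observation to unpack first is what the neighbourhood $\mathcal N_G(X)$ actually records: a vertex $W$ lies in $\mathcal N_G(X)$ exactly when $G=\langle W,X\rangle$. So the whole statement is really about translating the generation condition $G=\langle W,X\rangle$ into a condition on the canonical closure $\tilde X$, the intersection of all maximal subgroups containing $X$.

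The forward-looking crux is the following elementary fact, which I would isolate as the engine of the proof: \emph{for any $W,X\le G$, we have $G=\langle W,X\rangle$ if and only if $G=\langle W,\tilde X\rangle$.} One direction is trivial since $X\le\tilde X$. For the other, suppose $\langle W,X\rangle\neq G$; then $\langle W,X\rangle$ is contained in some maximal subgroup $M$, so $X\le M$, hence $\tilde X\le M$ by definition of $\tilde X$, and also $W\le M$, giving $\langle W,\tilde X\rangle\le M<G$. The contrapositive yields the claim. (The degenerate case where no maximal subgroup contains $X$, i.e. $\tilde X=G$, forces $X=G$, which is excluded since vertices are proper subgroups; so this does not cause trouble.) This fact says precisely that $\mathcal N_G(X)=\mathcal N_G(\tilde X)$, i.e. $X$ and its closure have the same neighbourhood, which is morally the content we are after.

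From here I would argue as follows. Suppose $\tilde X\le\tilde Y$. Take any $W\in\mathcal N_G(X)$, so $G=\langle W,X\rangle=\langle W,\tilde X\rangle$ by the engine fact. Since $\tilde X\le\tilde Y$, we get $G=\langle W,\tilde X\rangle\le\langle W,\tilde Y\rangle$, so $G=\langle W,\tilde Y\rangle=\langle W,Y\rangle$, whence $W\in\mathcal N_G(Y)$. Thus $\mathcal N_G(X)\subseteq\mathcal N_G(Y)$. Conversely, suppose $\mathcal N_G(X)\subseteq\mathcal N_G(Y)$ but $\tilde X\not\le\tilde Y$. The natural way to derive a contradiction is to exhibit a vertex $W$ adjacent to $X$ but not to $Y$. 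Because $\tilde Y$ is the intersection of the maximal subgroups containing $Y$, the failure $\tilde X\not\le\tilde Y$ means there is a maximal subgroup $M\ge Y$ with $\tilde X\not\le M$, equivalently $X\not\le M$. Then $M$ itself is a natural candidate for the separating vertex: we have $Y\le M$ so $\langle M,Y\rangle=M\neq G$, giving $M\notin\mathcal N_G(Y)$; and $\langle M,X\rangle$ properly contains $M$ (as $X\not\le M$), so by maximality $\langle M,X\rangle=G$, giving $M\in\mathcal N_G(X)$. This contradicts the inclusion of neighbourhoods, completing the converse.

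The step I expect to be the only delicate point is the handling of the boundary case $\tilde Y=G$ (no maximal subgroup contains $Y$, forcing $Y=G$) in the converse direction, since then there is no maximal $M\ge Y$ to separate with; but as noted this case is vacuous because $Y$ is a proper subgroup, so every proper $Y$ lies in some maximal subgroup and $\tilde Y<G$. Finally, the ``in particular'' statement follows immediately: $X\equiv_G Y$ means $\mathcal N_G(X)=\mathcal N_G(Y)$, i.e. both inclusions hold, which by the main equivalence is exactly $\tilde X\le\tilde Y$ and $\tilde Y\le\tilde X$, that is $\tilde X=\tilde Y$.
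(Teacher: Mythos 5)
Your proof is correct and, despite the repackaging through the auxiliary fact $\mathcal N_G(X)=\mathcal N_G(\tilde X)$, it is essentially the paper's argument: both directions come down to the observation that non-generation means containment in a common maximal subgroup, with maximal subgroups over $Y$ serving as the test vertices (your contradiction in the converse is just the contrapositive of the paper's direct argument). No gaps; the boundary case $\tilde Y=G$ is handled correctly since vertices are proper subgroups of a finite group.
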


\begin{proof}
	Assume $\mathcal N_G(X)\subseteq \mathcal N_G(Y)$ and let $M$ be a maximal subgroup of $G$. If $Y\leq M,$ then $\langle Y, M\rangle \neq G,$ so $M\notin \mathcal N_G(Y)$ and consequently $M\notin \mathcal N_G(X)$ i.e. $\langle X, M\rangle \neq G:$ this implies $X\leq M.$ If follows $\tilde X\leq \tilde Y.$ Conversely, assume $\tilde X \leq \tilde Y,$ or equivalently that every maximal subgroup of $G$ containing $Y$ contains also $X.$ If $Z\notin \mathcal N_G(Y),$ then $\langle Y, Z\rangle \leq M$ for some maximal subgroup $M$ of $G.$ It follows $\langle X, Z\rangle \leq M$ and consequently $Z\notin \mathcal N_G(X).$ 
\end{proof}

\begin{prop}Let $G$ be a finite group. The lattice $\mathcal M(G)$ can be completely determined from the knowledge of the graph $\Delta(G).$
\end{prop}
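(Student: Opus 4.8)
The plan is to reconstruct $\mathcal M(G)$, as an abstract lattice, directly from the combinatorial data of $\Delta(G)$, using Lemma \ref{nxny} as the main tool. The key idea is that the assignment $X\mapsto \tilde X$ will identify the elements of $\mathcal M(G)\setminus\{G\}$ with the equivalence classes of the relation $\equiv_G$, and that both the partition into classes and the induced order are visible in the graph. I would then restore the top element $G$ by hand.

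First I would verify that $X\mapsto \tilde X$ maps the vertex set of $\Delta(G)$ \emph{onto} $\mathcal M(G)\setminus\{G\}$. Since $G$ is finite, every proper subgroup $X$ lies in at least one, and in only finitely many, maximal subgroups; hence $\tilde X$ is a nonempty finite intersection of maximal subgroups, i.e.\ a proper maximal-intersection, so $\tilde X\in\mathcal M(G)\setminus\{G\}$. Conversely, if $H=M_1\cap\dots\cap M_t$ is a proper maximal-intersection, then $H$ is itself a vertex of $\Delta(G)$, and since every $M_i$ contains $H$ one gets $\tilde H\leq M_1\cap\dots\cap M_t=H$, whence $\tilde H=H$; this proves surjectivity. (The isolated vertices are precisely those $X$ with $\tilde X=\frat(G)$.)

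Next, by the second assertion of Lemma \ref{nxny}, two vertices satisfy $\tilde X=\tilde Y$ exactly when $X\equiv_G Y$, so the fibres of $X\mapsto\tilde X$ are precisely the $\equiv_G$-classes, and the map induces a bijection between the set of $\equiv_G$-classes and $\mathcal M(G)\setminus\{G\}$. Moreover the first assertion of Lemma \ref{nxny} gives $\tilde X\leq\tilde Y$ if and only if $\mathcal N_G(X)\subseteq\mathcal N_G(Y)$; therefore, ordering the classes by inclusion of their common neighbourhoods turns this bijection into an isomorphism of posets onto $\mathcal M(G)\setminus\{G\}$ equipped with the order inherited from the subgroup lattice. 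Every ingredient used here, namely the vertex set, the relation $\equiv_G$, and the comparisons $\mathcal N_G(X)\subseteq\mathcal N_G(Y)$, is determined by $\Delta(G)$ alone.

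It then remains only to recover the maximum element. Since $G$ is never of the form $\tilde X$ for a proper subgroup $X$, it corresponds to no vertex, so I would adjoin a single new greatest element to the reconstructed poset; because $\mathcal M(G)$ is obtained from $\mathcal M(G)\setminus\{G\}$ precisely by adding the top $G$, this yields a poset isomorphic to $\mathcal M(G)$, and a finite lattice is determined by its underlying order, so the lattice $\mathcal M(G)$ is recovered. The main points needing care are the two structural facts just isolated, the surjectivity of $X\mapsto\tilde X$ together with $\tilde H=H$ for maximal-intersections $H$, and the uniform correctness of adjoining a single top (which holds even in the degenerate case where $G$ has a unique maximal subgroup, so that $\Delta(G)$ is edgeless); Lemma \ref{nxny} supplies everything else.
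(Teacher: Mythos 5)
Your proof is correct and takes essentially the same route as the paper's: both use Lemma \ref{nxny} to identify the $\equiv_G$-classes with the proper maximal-intersections via $X\mapsto\tilde X$ (the paper phrases the class representative as $\langle X\mid X\in\mathcal C\rangle$, which equals $\tilde X$), recover the order from inclusions of neighbourhoods, and implicitly adjoin $G$ as the top element. Your version merely spells out the surjectivity step ($\tilde H=H$ for a maximal-intersection $H$) and the degenerate edgeless case, which the paper leaves tacit.
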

\begin{proof}
For an equivalent class $\mathcal C$ for the relation $\equiv_G,$ set $H_{\mathcal C}=\langle X \mid X\in \mathcal C\rangle.$ It follows from Lemma \ref{nxny} that $H_\mathcal C=\tilde X$ for every $\tilde X\in \mathcal C.$ So the map $\phi: \mathcal C \mapsto H_{\mathcal C}$ induces a bijection from the set of the equivalence classes to the set of the maximal-intersections in $G.$ Moreover, if $X_1, X_2\in \mathcal M(G),$ then $X_1\leq X_2$ if and only if  $\mathcal N_G(X_1) \subseteq \mathcal N_G(X_2).$
\end{proof}

We conclude this section with an example  showing that if $X_1, X_2 \in \mathcal M(G),$ then it is not necessarily true that $\langle X_1, X_2 \rangle \in \mathcal M(G).$
Let $\FF$ be the field with $3$ elements and let $C=\langle -1 \rangle $ be the multiplicative group of $\FF.$
Let $V=\FF^3$
be a $3$-dimensional vector space over $\FF$ and let $\sigma=(1,2,3)\in \perm(3).$  The wreath product $H=C\wr \langle \sigma \rangle$ has an irreducible action on $V$ defined as follows:
if $v=(f_1,f_2,f_3)\in V$ and $h=(c_1,c_2,c_3)\sigma^i \in H$, then $v^h=(f_{1\sigma^{-i}}c_{1\sigma^{-i}},f_{2\sigma^{-i}}c_{2\sigma^{-i}},f_{3\sigma^{-i}}c_{3\sigma^{-i}}).$
Consider the semidirect product
$G=V\rtimes H$ and let $v=(1,-1,0)\in V.$ Since $H$ and $H^v$ are two maximal subgroups of $G,$ $K:=H\cap H^v=C_H(v)=\{(1,1,z)\mid z \in C\}\cong C_2$ is a maximal-intersection in $G.$
Since $\frat(H)=1,$ $V$ is also a maximal-intersection in $G.$ However $VK$ is not a maximal-intersection in $G.$ Indeed if $V\leq X$ is a maximal-intersection in $G,$ then $X=VY$ with $Y$ a maximal-intersection in $H.$ But $H\cong C_2\times \alt(4)$ and the unique subgroup of order 2 of $H$ that can be obtained as intersection of maximal subgroups is $\{(z,z,z)\mid z \in C\}.$

\section{Proof of Theorem \ref{main}}

Recall that the M\"obius function  $\mu _G$ is defined on the subgroup
lattice  of $G$ as $\mu _G(G)=1$ and $\mu _G(H)=-\sum
_{H<K}\mu _G(K)$ for any $H<G$. If $H\leq G$ cannot be expressed as an intersection
of maximal subgroups of $G$, then $\mu_G(H)=0$ (see \cite[Theorem 2.3]{hall}), so for every $H\in \mathcal M(G)$, the value $\mu_G(H)$ can be completely determined from the knowledge of the lattice $\mathcal M(G).$

\begin{prop}\label{moe}
	Let $G$ be a finite soluble group. For every irreducible $G$-module $V$ define
	$q(V)=|\End_{G}(V)|$,
	set $\theta(V)=0$  if $V$ is a trivial $G$-module,  and $\theta(V) = 1$ otherwise,
	and let $\delta(V)$ be the number of  chief factors $G$-isomorphic to $V$ and complemented in an arbitrary chief series of $G$. Let $\mathcal V(G)$ be the set of irreducible $G$-module $V$ with $\delta(V)\neq 0.$
	Then
	$$\mu_G(1)=\begin{cases}
	0 &\text{if \ $\prod_{V\in \mathcal V(G)}|V|^{\delta(A)}\neq |G|,$}\\
\prod_{V\in \mathcal V(G)}(-1)^{\delta(V)}|V|^{\theta(V) \delta(V)}q(V)^{\binom{\delta(V)}{2}} &\text{otherwise.}
	\end{cases}$$
\end{prop}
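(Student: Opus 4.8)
The plan is to obtain $\mu_G(1)$ as a single coefficient of the Eulerian (probabilistic zeta) function $P_G(s)=\sum_{H\leq G}\mu_G(H)\,|G:H|^{-s}$. Writing this as a Dirichlet series $\sum_n a_n n^{-s}$, one has $a_{|G|}=\sum_{|G:H|=|G|}\mu_G(H)=\mu_G(1)$, since $H=1$ is the only subgroup of index $|G|$. So the whole computation reduces to extracting the coefficient of $|G|^{-s}$, and here I would invoke Gaschütz's theorem for soluble groups: running along a chief series $1=G_0<\dots<G_n=G$, every Frattini chief factor $G_i/G_{i-1}$ contributes the trivial factor $1$ to $P_G$, while every complemented chief factor $G_i/G_{i-1}\cong V_i$ contributes $\bigl(1-c_i\,|V_i|^{-s}\bigr)$, where $c_i$ is the number of complements of $G_i/G_{i-1}$ in $G/G_{i-1}$. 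Telescoping yields $P_G(s)=\prod_{i\ \mathrm{complemented}}\bigl(1-c_i\,|V_i|^{-s}\bigr)$.

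The vanishing case then falls out at once. By Jordan--Hölder, $\prod_{V}|V|^{\delta(V)}$ is the product of the orders of the complemented chief factors, hence equals $|G|$ exactly when $G$ has no Frattini chief factors, i.e.\ when $\frat(G)=1$, and is a proper divisor of $|G|$ otherwise. In the latter case the Dirichlet series $\prod_i\bigl(1-c_i|V_i|^{-s}\bigr)$ is supported on divisors of $\prod_i|V_i|<|G|$, so the coefficient of $|G|^{-s}$ is $0$ and $\mu_G(1)=0$. This is consistent with \cite[Theorem 2.3]{hall}, since $\frat(G)\neq 1$ forces $1$ not to be an intersection of maximal subgroups.

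It remains to treat $\frat(G)=1$, where $\prod_i|V_i|=|G|$. Extracting the coefficient of $|G|^{-s}=\prod_i|V_i|^{-s}$ from $\prod_i\bigl(1-c_i|V_i|^{-s}\bigr)$ now forces selection of the nonconstant term $-c_i|V_i|^{-s}$ from every factor, so that $\mu_G(1)=\prod_i(-c_i)$. The remaining task is to identify the complement numbers $c_i$. By Gaschütz's count of complements of an abelian complemented minimal normal subgroup,
\[
c_i=\bigl|H^1(G/G_i,\,V_i)\bigr|\cdot\frac{|V_i|}{|C_{V_i}(G/G_{i-1})|}.
\]
The fixed-point ratio equals $|V|^{\theta(V)}$ (it is $1$ for a trivial module, since then $C_V(G)=V$, and $|V|$ otherwise, since a nontrivial irreducible module has no nonzero fixed points); and the relevant first cohomology is a vector space over $\End_G(V)$ whose order is $q(V)$ raised to the number of complemented factors $G$-isomorphic to $V$ lying above $G_i$. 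Hence, as $V_i$ ranges over the factors isomorphic to a fixed $V$, the numbers $c_i$ run exactly through $|V|^{\theta(V)}q(V)^{0},\dots,|V|^{\theta(V)}q(V)^{\delta(V)-1}$. Grouping $\prod_i(-c_i)$ by isomorphism type and using $\sum_{j=0}^{\delta(V)-1}j=\binom{\delta(V)}{2}$ gives $\prod_V(-1)^{\delta(V)}|V|^{\theta(V)\delta(V)}q(V)^{\binom{\delta(V)}{2}}$, as claimed.

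The main obstacle is precisely this last identification: showing that, as one peels off the complemented chief factors isomorphic to a fixed $V$, the associated complement numbers form the geometric progression $|V|^{\theta(V)}q(V)^{j}$. This rests on the crown structure of a finite soluble group---concretely, on the fact that $\bigl|H^1(G/G_i,V)\bigr|=q(V)^{\delta_{G/G_i}(V)}$, where $\delta_{G/G_i}(V)$ counts the complemented factors $\cong V$ strictly above $G_i$---together with the Jordan--Hölder invariance of each $\delta(V)$. The latter guarantees that the final product is independent of the chosen chief series, a consistency already forced by the fact that $P_G(s)$ is an invariant of $G$.
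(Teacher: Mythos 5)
Your proof is correct in its main line, and it takes a genuinely different route from the paper. The paper argues by induction on $|G|$: it strips off a minimal normal subgroup $N$, applies the recursion $\mu_G(1)=\mu_{G/N}(1)\sum_{K}\mu_G(K)$ of \cite[Lemma 3.1]{isa} (the sum running over the complements $K$ of $N$, each of which is maximal and so contributes $\mu_G(K)=-1$), and then quotes Gasch\"utz's Satz~3 for the number of complements, $c=|N|^{\theta(N)}q(N)^{\delta(N)-1}$. You instead invoke Gasch\"utz's full multiplicative factorization of $P_G(s)$ along a chief series and read off $\mu_G(1)$ as the coefficient of $|G|^{-s}$, identifying each complement number $c_i$ via $|Z^1|=|H^1|\cdot|B^1|$ together with the crown-theoretic fact $|H^1(G/G_i,V)|=q(V)^{\delta_{G/G_i}(V)}$. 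That cohomological fact is not an additional obstacle beyond the paper's toolkit: it is equivalent to Gasch\"utz's Satz~3 applied to $G/G_{i-1}$, using $\delta_{G/G_{i-1}}(V)=1+\delta_{G/G_i}(V)$, so both arguments ultimately rest on the same arithmetic input from \cite{g2}. What your route buys: no induction, a transparent vanishing case, and automatic Jordan--H\"older consistency (since $P_G(s)$ is an invariant of $G$). What it costs: you need the full strength of Gasch\"utz's product theorem, where the paper only needs the one-step M\"obius recursion.

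One genuine error, though not load-bearing: the claim that $\prod_V|V|^{\delta(V)}=|G|$ holds ``exactly when $\frat(G)=1$'' is false. Take the Frobenius group $G=C_5\rtimes C_4$ with faithful action: here $\frat(G)=1$, yet in the chief series $1<C_5<D_{10}<G$ the factor $D_{10}/C_5$ lies in $\frat(G/C_5)\cong C_2$ and is not complemented, so the product of the orders of the complemented chief factors is $10\neq 20$; accordingly $\mu_G(1)=0$ even though $1$ \emph{is} an intersection of maximal subgroups (two conjugates of $C_4$ meet trivially), so this vanishing is not the one detected by \cite[Theorem 2.3]{hall} either. This is precisely why the proposition phrases its dichotomy in terms of $\prod_V|V|^{\delta(V)}\neq|G|$ rather than $\frat(G)\neq 1$. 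Your argument survives because in both cases you only ever use the hypotheses $\prod_i|V_i|<|G|$ or $\prod_i|V_i|=|G|$, never $\frat(G)=1$ itself; but the ``i.e.'' and the consistency remark built on it should be deleted.
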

\begin{proof}
We prove the statement by induction on the order of $G$. Let $N$ be a minimal normal subgroup of $G.$ By \cite[Lemma 3.1]{isa}
$$\mu_G(1)=\mu_{G/N}(1)\sum_{K\in \mathcal K}\mu_G(K),$$
denoting by $\mathcal K$ the set of all subgroups of $G$ which complement $N.$ If $K=\emptyset,$ then $N$ is a non-complemented chief factor of $G$ and $\mu_G(1)=0.$ In any case, since $N$ is a minimal normal subgroup of $G,$ if $K\in \mathcal K,$ then $K$ is a maximal subgroup of $G$ and consequently $\mu_G(K)=-1.$ Thus $\mu_G(1)=-\mu_{G/N}(1)\cdot c,$
where $c$ is the number of complements of $N$ in $G.$ To conclude it suffices to notice that, by \cite[Satz 3]{g2}, $c=|N|^{\theta(N)}q(N)^{\delta(N)-1}.$
\end{proof}

\begin{cor}\label{monil} If $X \cong C_{p_1}^{m_1}\times \dots \times  C_{p_t}^{m_t},$ then $\mu_X(1)=(-1)^{m_1}p_1^{\binom{m_1}{2}}\cdots (-1)^{m_t}p_t^{\binom{m_t}{2}}.$
\end{cor}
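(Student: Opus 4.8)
The plan is to obtain Corollary~\ref{monil} as a direct specialization of Proposition~\ref{moe}, applied to $G=X$. Since $X$ is nilpotent it is in particular soluble, so the proposition applies, and here $p_1,\dots,p_t$ are understood to be distinct primes. The whole strategy is to identify the set $\mathcal V(X)$ and then to evaluate each of the quantities $\delta(V)$, $\theta(V)$ and $q(V)$ occurring in the formula.

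First I would analyse the chief factors of $X\cong C_{p_1}^{m_1}\times\dots\times C_{p_t}^{m_t}$. Being abelian, $X$ coincides with its own centre, so every chief factor is a central, hence trivial, $X$-module; in particular $\theta(V)=0$ for each $V\in\mathcal V(X)$. Moreover each chief factor has prime order $p_i$ for some $i$, and as a trivial module over $\mathbb{F}_{p_i}$ it is determined up to $X$-isomorphism by the prime $p_i$ alone. Thus $\mathcal V(X)=\{V_1,\dots,V_t\}$, where $V_i$ denotes the trivial one-dimensional $\mathbb{F}_{p_i}$-module.

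Next I would compute the multiplicities and the remaining invariants. Since $X$ is a direct product of elementary abelian groups of pairwise coprime orders, every subgroup is a direct factor, so every chief factor is complemented; counting the factors of order $p_i$ in any chief series gives $\delta(V_i)=m_i$. The endomorphism ring of the trivial module $V_i$ is $\mathbb{F}_{p_i}$, whence $q(V_i)=p_i$, and of course $|V_i|=p_i$. In particular $\prod_{V\in\mathcal V(X)}|V|^{\delta(V)}=\prod_{i}p_i^{m_i}=|X|$, so we are in the second case of Proposition~\ref{moe}. Substituting $\theta(V_i)=0$, $\delta(V_i)=m_i$ and $q(V_i)=p_i$ yields
\[
\mu_X(1)=\prod_{i=1}^{t}(-1)^{m_i}\,p_i^{\,0\cdot m_i}\,p_i^{\binom{m_i}{2}}=\prod_{i=1}^{t}(-1)^{m_i}p_i^{\binom{m_i}{2}},
\]
which is precisely the asserted identity.

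Since the corollary is nothing more than an evaluation of Proposition~\ref{moe}, there is no genuine obstacle. The only points deserving care are the verification that every chief factor of an abelian group is complemented, so that $\delta(V_i)=m_i$ and no factor drops out of the product, and the observation that the action on each chief factor is trivial, which forces $\theta(V_i)=0$ and makes the middle factors $|V_i|^{\theta(V_i)\delta(V_i)}$ collapse to $1$.
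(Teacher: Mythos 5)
Your proposal is correct and follows exactly the route the paper intends: the paper states Corollary~\ref{monil} with no separate proof, treating it as an immediate specialization of Proposition~\ref{moe}, and your evaluation ($\mathcal V(X)$ consisting of the trivial modules $V_i$ of order $p_i$, with $\delta(V_i)=m_i$, $\theta(V_i)=0$, $q(V_i)=p_i$, and the order condition $\prod_i p_i^{m_i}=|X|$ placing us in the second case) is precisely that specialization. Your added care about the primes $p_i$ being distinct and about all chief factors being complemented correctly identifies the only points where the reduction could go wrong.
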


\begin{lemma}\label{normali}
	Let $G$ be  a finite group and assume that there exists a finite nilpotent group with $\mathcal M(G)\cong \mathcal M(X).$ Then every normal subgroup $N$ of $G$ containing $\frat(G)$ is a maximal-intersection in $G$ and $\mu_G(N)\neq 0.$
\end{lemma}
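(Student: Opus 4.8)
The plan is to convert both assertions into a single statement about the Möbius function, namely that $\mu_G(N)\neq 0$, and then read off membership in $\mathcal M(G)$ from Hall's theorem. First I would pass to $\overline G=G/\frat(G)$: since every maximal subgroup contains $\frat(G)$, the map $H\mapsto H/\frat(G)$ gives $\mathcal M(G)\cong\mathcal M(\overline G)$, and it identifies the normal subgroups of $G$ containing $\frat(G)$ with the normal subgroups of $\overline G$, preserving Möbius values (intervals $[N,G]$ and $[N/\frat(G),\overline G]$ are isomorphic posets). Likewise $\mathcal M(X)\cong\mathcal M(X/\frat(X))$ with $X/\frat(X)\cong C_{p_1}^{m_1}\times\dots\times C_{p_t}^{m_t}$ nilpotent and Frattini-free. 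Thus I may assume $\frat(\overline G)=1$, and $\mathcal M(\overline G)\cong\mathcal M(X)$ with $X$ nilpotent and Frattini-free.

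The first key point is that $\mu_{\overline G}(1)\neq 0$. Since $\frat(\overline G)=1$, the trivial subgroup is the bottom element $\hat 0$ of $\mathcal M(\overline G)$, so by the paragraph preceding this lemma the value $\mu_{\overline G}(1)=\mu_{\mathcal M(\overline G)}(\hat 0,\hat 1)$ is determined by the abstract lattice $\mathcal M(\overline G)$. Transporting it across the isomorphism $\mathcal M(\overline G)\cong\mathcal M(X)$ gives $\mu_{\overline G}(1)=\mu_{\mathcal M(X)}(\hat0,\hat1)=\mu_{X/\frat(X)}(1)$, and by Corollary \ref{monil} this equals $\prod_i(-1)^{m_i}p_i^{\binom{m_i}{2}}\neq 0$.

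The second, and main, step is the propagation statement: \emph{if $H$ is any finite group with $\mu_H(1)\neq 0$, then $\mu_{H/L}(1)\neq 0$ for every normal subgroup $L$ of $H$.} I would prove this by induction on $|H|$. For $L=1$ there is nothing to prove; otherwise choose a minimal normal subgroup $A\leq L$ of $H$. Isaacs' formula \cite[Lemma 3.1]{isa} (the same identity used to prove Proposition \ref{moe}) gives $\mu_H(1)=\mu_{H/A}(1)\sum_{K\in\mathcal K}\mu_H(K)$, where $\mathcal K$ is the set of complements of $A$; since $\mu_H(1)\neq 0$ this forces $\mu_{H/A}(1)\neq 0$, and then the inductive hypothesis applied to $H/A$ with the normal subgroup $L/A$ yields $\mu_{(H/A)/(L/A)}(1)=\mu_{H/L}(1)\neq 0$. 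Applying this to $H=\overline G$ with $\mu_{\overline G}(1)\neq 0$, I obtain $\mu_{\overline G/\overline N}(1)\neq 0$ for every normal $\overline N\trianglelefteq\overline G$. Because $\overline N$ is normal, the interval $[\overline N,\overline G]$ in the subgroup lattice is isomorphic to the subgroup lattice of $\overline G/\overline N$, so $\mu_{\overline G}(\overline N)=\mu_{\overline G/\overline N}(1)\neq 0$; finally $\mu_{\overline G}(\overline N)\neq 0$ forces $\overline N$ to be an intersection of maximal subgroups by (the contrapositive of) \cite[Theorem 2.3]{hall}, i.e.\ $\overline N\in\mathcal M(\overline G)$. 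Pulling back through $\mathcal M(G)\cong\mathcal M(\overline G)$ delivers both conclusions for the original $N$.

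I expect the propagation step to be the real heart of the argument: the equivalences $N\in\mathcal M(G)\iff\mu_G(N)\neq 0\iff\frat(G/N)=1$ are nearly tautological, so all the content lies in showing that the nonvanishing of $\mu_{\overline G}(1)$ forces the \emph{quotients} of $\overline G$ to stay Frattini-free (equivalently, that every chief factor of $\overline G$ is complemented). The delicate feature is that one must argue this for a group not yet known to be soluble, so Proposition \ref{moe} is unavailable; the point that makes it go through is that Isaacs' identity \cite[Lemma 3.1]{isa} holds with no solubility hypothesis, which is exactly why this lemma can be established without invoking the classification of finite simple groups even though the proof of Theorem \ref{main} as a whole does.
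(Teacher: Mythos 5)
Your proof is correct and takes essentially the same route as the paper: both pass to the Frattini quotient, use the lattice isomorphism $\mathcal M(G/\frat(G))\cong\mathcal M(X/\frat(X))$ together with Hall's theorem and Corollary \ref{monil} to get $\mu_{G/\frat(G)}(1)=\mu_{X/\frat(X)}(1)\neq 0$, and then propagate nonvanishing to quotients $G/N$ via \cite[Lemma 3.1]{isa} before concluding with (the contrapositive of) \cite[Theorem 2.3]{hall}. The only difference is cosmetic: where the paper cites \cite[Lemma 3.1]{isa} for the divisibility $\mu_{G/N}(1)\mid\mu_{G/\frat(G)}(1)$, you prove the weaker but sufficient nonvanishing statement by an explicit induction on the group order using that same lemma.
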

\begin{proof}
We have $\mathcal M(G/\frat(G))\cong \mathcal M(G)\cong  M(X)\cong \mathcal M(X/\frat(X))$,
and this implies $\mu_{X/\frat(X)}(1)=\mu_{G/\frat(G)}(1).$
By Corollary  \ref{monil}, $\mu_{X/\frat(X)}(1)\neq 0$ and therefore $\mu_{G/\frat(G)}(1)\neq 0.$ If $N$ is a normal subgroup of $G$ containing $\frat(G),$ then we deduce by  \cite[Lemma 3.1]{isa} that $\mu_G(N)=\mu_{G/N}(1)$ divides $ \mu_{G/\frat(G)}(1)$. As a consequence $\mu_G(N)\neq 0$ and $N$ is a maximal-intersection in $G.$
\end{proof}
 
\begin{lemma}\label{super}
	Let $H$ be a finite supersoluble group and $V$ a faithful irreducible $H$-module. Consider the semidirect product $G=V\rtimes H.$ Suppose that there exists a nilpotent group $X$ with $\mathcal M(G)\cong \mathcal M(X).$ Then $V$ is cyclic of prime order.
\end{lemma}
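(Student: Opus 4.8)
The plan is to exploit that the minimal normal subgroup $V$ of $G=V\rtimes H$ must appear as an atom of $\mathcal{M}(G)$, and to read off from the product-of-subspace-lattices shape of $\mathcal{M}(X)$ enough numerical constraints on how $V$ sits to force $\dim V=1$.

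First I would record the easy structural facts. Since $V$ has the complement $H$, we have $V\not\leq\frat(G)$, so $V\cap\frat(G)=1$; as $[\frat(G),V]\leq V\cap\frat(G)=1$ this gives $\frat(G)\leq C_G(V)$, and Dedekind's law together with $C_H(V)=1$ (faithfulness) yields $C_G(V)=V$, whence $\frat(G)\leq V\cap C_G(V)=1$. Next, $V$ is an atom of $\mathcal{M}(G)$: by Lemma \ref{normali} the normal subgroup $V$ is a maximal-intersection, and if $1<Y<V$ then $Y$ lies in no complement of $V$ (a complement meets $V$ trivially and so contains no nontrivial element of $V$), so the only maximal subgroups containing $Y$ are those containing $V$; hence $\tilde Y\geq V$ and no proper nontrivial subgroup of $V$ is a maximal-intersection. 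Thus $V$ covers $\frat(G)=1$ in $\mathcal{M}(G)$.

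Now transport to $X$. Replacing $X$ by $X/\frat(X)$ we may take $X$ elementary abelian, so $\mathcal{M}(X)$ is the direct product $\prod_j L_j$ of the subspace lattices $L_j$ of the Sylow factors $\mathbb{F}_{p_j}^{d_j}$, and under $\mathcal{M}(G)\cong\mathcal{M}(X)$ the atom $V$ corresponds to a point $a$ in one factor $L_i$. The key local computation is as follows. If $W$ covers $V$ in $\mathcal{M}(G)$ (a ``line'' through $V$), then, since $V\leq W$, $V$ normalizes $W$ and permutes by conjugation the atoms below $W$; an atom $Z\neq V$ below $W$ satisfies $Z\cap V=1$, a direct computation gives $N_V(Z)=C_V(Z)$, and $C_V(Z)=V$ would force $Z\leq C_G(V)=V$. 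Hence each such $V$-orbit has size $|V:C_V(Z)|=p^{m}$ with $m\geq1$, where $p$ is the prime with $V$ a $p$-group. On the other side, in $\prod_j L_j$ a line through $a$ carries either $p_i+1$ atoms (a line inside $L_i$) or $2$ atoms (the join of $a$ with a point of another factor), so the number of atoms below $W$ other than $V$ is $p_i$ or $1$. Comparing, $p^{m}\in\{1,p_i\}$; as $p^{m}\geq p\geq2$ we get $p^{m}=p_i$, so $m=1$, $p_i=p$, and no ``mixing'' lines occur. Consequently $X$ has a single prime factor, $\mathcal{M}(G)$ is the subspace lattice of $\mathbb{F}_p^{d}$, and --- the crucial by-product --- every atom $Z\neq V$ satisfies $\dim C_V(Z)=\dim V-1$, i.e. $Z$ fixes a hyperplane of $V$.

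The contradiction, when $\dim V\geq2$, comes from Clifford's theorem. A complement $C\cong H$ is a maximal subgroup of $G$, hence a coatom of $\mathcal{M}(G)$, and the interval below it is again a projective geometry, so $C$ lies above some atom $Z\neq V$. Since $H$ is supersoluble, $C$ has a minimal normal subgroup $N$ of prime order; using Lemma \ref{normali} for $H$ (note $\mathcal{M}(H)\cong[V,G]$ is isomorphic to the $\mathcal{M}$ of an elementary abelian group) together with a computation of $\tilde N$ in $G$, one checks that $N$ is a maximal-intersection of $G$, and being of prime order it is then an atom $Z=N$ with $N\cap V=1$. By the previous paragraph $\dim C_V(N)=\dim V-1$. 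But $N\trianglelefteq C$ and $V$ is an irreducible $C$-module, so by Clifford $V{\restriction}_N$ is a sum of $C$-conjugate nontrivial irreducible $N$-modules (a trivial constituent would force all of them trivial, so $N$ would centralize $V$, against faithfulness); thus $C_V(N)=0$. Combining gives $\dim V-1=0$, so $V$ is cyclic of prime order.

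I expect the main obstacle to be the third paragraph --- collapsing $\mathcal{M}(G)$ to a single subspace lattice over the prime field $\mathbb{F}_p$ and extracting the hyperplane-fixing property --- together with the verification in the last paragraph that the chosen minimal normal subgroup $N$ of a complement really is a maximal-intersection of $G$. The latter is delicate exactly when $\frat(H)\neq1$, where $\tilde N$ computed from the maximal subgroups of $H$ can a priori exceed $N$; handling this (for instance by first reducing to $\frat(H)=1$, or by choosing $N$ inside the appropriate atom of $\mathcal{M}(H)$) is the fiddly point. Everything else --- the triviality of $\frat(G)$, the atom property of $V$, and the final Clifford step --- is routine once the projective-geometry picture and the orbit-size computation are in place.
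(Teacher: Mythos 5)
Your route is genuinely different from the paper's. The paper's proof quotes Wolf's theorem on large orbits of supersoluble linear groups \cite{wolf} to produce three maximal subgroups of $G$ with trivial intersection, deduces that $|X|$ is a product of at most three primes, and then gets a contradiction by M\"obius-function arithmetic (Proposition \ref{moe} and Corollary \ref{monil}). You instead exploit the geometry of $\mathcal M(X)$ as a product of projective spaces: the $V$-conjugation orbit count on atoms below a cover of $V$ forces $X$ to be an elementary abelian $p$-group, so $\mathcal M(G)$ is a single projective geometry in which every atom $Z\neq V$ satisfies $|V:C_V(Z)|=p$, and Clifford's theorem then kills the case $\dim V\geq 2$. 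I checked your first three paragraphs (the atom property of $V$, the identity $N_V(Z)=C_V(Z)$, and the cover comparison) and they are correct; the only looseness is that the number of atoms below a cover, other than $V$, is a \emph{sum} of orbit sizes $p^{m}$ rather than a single one, but since each summand is divisible by $p$ the comparison with $\{1,p_i\}$ still forces $p_i=p$ and a single orbit of size $p$. Your approach avoids both Wolf's theorem and the M\"obius machinery, and yields more structural information than the paper's argument does.

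The genuine gap is the one you flag yourself: you never prove that the minimal normal subgroup $N$ of the complement $C$ is a maximal-intersection in $G$, and neither suggested fix is carried out. In particular, ``reducing to $\frat(H)=1$'' is not available as a reduction, since $\frat(H)$ may act nontrivially on $V$ (take $H=C_4$ acting faithfully on $V=C_5$), so one cannot pass to $H/\frat(H)$. The gap is closable, however, with tools already in play. First, $\frat(H)=1$ holds \emph{automatically}: by Lemma \ref{normali}, $V\in\mathcal M(G)$, and every maximal subgroup of $G$ containing $V$ is the preimage of a maximal subgroup of $G/V\cong H$; hence $V$, being an intersection of such maximals, contains the preimage of $\frat(G/V)$, which forces $\frat(H)=1$. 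Second, as you note, $\mathcal M(C)\cong\mathcal M(G/V)$ is isomorphic to the interval above a point of the projective geometry, i.e.\ to $\mathcal M(C_p^{d-1})$, so Lemma \ref{normali} applies to $C$; since $\frat(C)=1$, the normal subgroup $N$ is an intersection $N=\bigcap_k M_k$ of maximal subgroups of $C$. Finally, each $VM_k$ is a maximal subgroup of $G$, and the modular law gives $VM_k\cap C=M_k(V\cap C)=M_k$, whence $\bigcap_k (VM_k)=V\bigl(\bigcap_k M_k\bigr)=VN$ and $VN\cap C=N$; since $C$ is itself maximal in $G$, we get $N=\bigl(\bigcap_k VM_k\bigr)\cap C$, an intersection of maximal subgroups of $G$, hence (having prime order) an atom of $\mathcal M(G)$ distinct from $V$. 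With this paragraph inserted, your proof of Lemma \ref{super} is complete.
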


\begin{proof} Since $\mathcal M(X)\cong \mathcal M(X/\frat(X)),$ we may assume $\frat(X)=1.$ There exist $v$ and $w$ in $V$ such that $C_H(v)\cap C_H(w)=1$ (see \cite[Theorem A]{wolf}). This implies that  $H, H^v, H^w$ are maximal subgroups of $G$ with trivial intersection. But then also $X$ must contain three maximal subgroups with trivial intersection and consequently $|X|$ is the product of at most three (non necessarily  distinct) primes.  Suppose $|V|=p^a,$ with $p$ a prime and $a\geq 2.$ By Proposition \ref{moe}, $0\neq \mu_X(1)=\mu_G(1)$ is divisible by $p^a$. This is possible only if
	$X\cong C_p\times C_p\times C_p$ and $\mu_X(1)=\mu_G(1)=-p^3.$
	Moreover, since $N$ is a minimal element in $\mathcal M(G),$ it must be $\mathcal M(H)\cong \mathcal M(G/N)\cong \mathcal M (C_p\times C_p)$ and therefore, by Corollary \ref{monil},  $\mu_H(1)=p.$ Moreover a chain in $\mathcal M(H)$ must have length at most 2,
	so $|H|$ is the product of two primes. It follows from from Proposition \ref{moe} that $O_p(G)\neq 1$, in contradiction with the fact that $H$ is a faithful irreducible $H$-module.
\end{proof}

\begin{lemma}\label{as}Let $G$ a monolithic primitive group with non-abelian socle. If there exists a finite nilpotent group $X$ with $\mathcal M(G) \cong \mathcal M(X),$ then $G$ is an almost simple group.
\end{lemma}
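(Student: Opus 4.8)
The plan is to transport the rigid lattice structure of $\mathcal M(X)$ onto $\mathcal M(G)$ and to contradict it unless the socle is simple. Since $\mathcal M(X)\cong\mathcal M(X/\frat(X))$ we may assume $\frat(X)=1$, so that $X\cong C_{p_1}^{m_1}\times\cdots\times C_{p_t}^{m_t}$ with the $p_i$ distinct and $\mathcal M(X)=L(X)$ equal to the direct product of the subspace lattices $L(\FF_{p_i}^{m_i})$. Such a lattice, and every interval inside it, is a finite product of projective geometries; in particular it is \emph{modular}. Hence $\mathcal M(G)$ and all of its intervals are modular. As $G$ is primitive we have $\frat(G)=1$, so $1$ is the least element of $\mathcal M(G)$; write $N=\soc(G)=S_1\times\cdots\times S_k$ with each $S_i\cong S$ non-abelian simple. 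By Lemma \ref{normali}, $N$ is a maximal-intersection, so the interval $[1,N]$ of $\mathcal M(G)$ is a modular lattice whose elements are exactly the maximal-intersections of $G$ contained in $N$, that is, the subgroups $N\cap H$ with $H\in\mathcal M(G)$. The goal is to force $k=1$.

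The leverage is that $L(N)$ is \emph{not} modular when $k\ge 2$. Fix a maximal subgroup $T$ of $S_1$ and set
\[
A=S_1\times\cdots\times S_{k-1},\qquad B=T\times S_2\times\cdots\times S_{k-1},
\]
and let $D$ be a full diagonal subgroup of $N$, so $D\cong S$. Then $1<B<A<N$, while $D$ is incomparable with both $B$ and $A$. One checks at once that $D\cap A=D\cap B=1$; moreover $\langle D,A\rangle=N$, and also $\langle D,B\rangle=N$, because the $D$-conjugates of $T$ are all the $S$-conjugates of $T$ and these generate $S_1$, as $S$ is simple. Thus $\{1,B,A,D,N\}$ is a pentagon in $L(N)$. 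Since in $\mathcal M(G)$ the meet of two subgroups contained in $N$ is their intersection, and their join is the least maximal-intersection containing the subgroup they generate, this pentagon transfers verbatim to $[1,N]$ \emph{provided} $A$, $B$ and $D$ are themselves maximal-intersections of $G$; that would contradict the modularity of $[1,N]$ and force $k=1$, i.e. $G$ almost simple.

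It therefore remains to show that, when $k\ge 2$, the interval $[1,N]$ really does contain such a forbidden configuration. Here one invokes the description of the maximal subgroups of the primitive group $G$ given by the O'Nan--Scott theorem: according to the type of $G$ (product action, simple or compound diagonal, twisted wreath) the point stabilizers and the stabilizers of the relevant product decompositions are maximal, and their intersections with $N$ realise, respectively, a product subgroup of the shape $A$, a full diagonal subgroup $D$, and an intermediate subgroup of the shape $B$. Carrying this out type by type, and in each case producing three maximal-intersections inside $N$ at heights witnessing a pentagon (equivalently, two saturated chains of different lengths, contradicting the gradedness of $\mathcal M(X)$), completes the argument.

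The main obstacle is precisely this last step. The lattice $\mathcal M(G)$ remembers only the subgroups of $N$ of the form $N\cap H$ with $H$ a maximal-intersection, so a pentagon present in $L(N)$ need not survive in $[1,N]$; one must genuinely control which subgroups of $S^k$ occur as intersections of $N$ with maximal subgroups of $G$, and it is here that the structure theory of primitive groups seems unavoidable, the diagonal and product-action types being the delicate ones. A purely Möbius-theoretic route is also conceivable: by Corollary \ref{monil} one has $\mu_G(1)=\mu_X(1)=\prod_i(-1)^{m_i}p_i^{\binom{m_i}{2}}$, a very restricted shape, while the reduction formula of \cite{isa} used in Proposition \ref{moe} gives $\mu_G(1)=\mu_{G/N}(1)\sum_{K}\mu_G(K)$ summed over the complements $K$ of $N$. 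Evaluating this sum for a non-abelian socle is, however, itself the hard part, so I expect the lattice-theoretic argument above to be the more tractable one.
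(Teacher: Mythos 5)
Your reduction to $\frat(X)=1$, the observation that $\mathcal M(X)$ is then a product of projective geometries and hence modular (as are all of its intervals), and the pentagon $\{1,B,A,D,N\}$ inside the full subgroup lattice of $N=\soc(G)$ are all correct; it is also true that this pentagon would transfer to $\mathcal M(G)$ and contradict modularity \emph{provided} $A$, $B$ and $D$ are maximal-intersections in $G$. But that proviso is exactly what you never establish, and it is where the entire difficulty of the lemma lies: $\mathcal M(G)$ only contains subgroups that are intersections of maximal subgroups of $G$, and there is no a priori reason why the product subgroup $S_1\times\cdots\times S_{k-1}$, the subgroup $T\times S_2\times\cdots\times S_{k-1}$, or a full diagonal subgroup of the socle should arise this way; whether $G$ even possesses a maximal subgroup meeting $N$ in a full diagonal depends on the O'Nan--Scott type of $G$ and can fail. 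Deferring this to an unspecified ``type by type'' invocation of the O'Nan--Scott theorem is not a proof but a restatement of the problem, as you yourself concede in your closing paragraph. So the proposal has a genuine gap.

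It is instructive to compare with how the paper avoids this membership problem: it only ever uses subgroups that are \emph{visibly} in $\mathcal M(G)$. Writing $N=S_1\times\cdots\times S_n$ and embedding $G$ in $H\wr\sym(n)$ with $H$ almost simple with socle $S$, the paper takes maximal subgroups $M_1,\dots,M_t$ of $G$ with $M_1\cap\dots\cap M_t=N$ (these exist since $\frat(G/N)=1$ by Lemma \ref{normali}), one explicit maximal subgroup $L=G\cap(R\wr\sym(n))$ where $R$ is maximal in $H$ with $H=RS$ and $K=R\cap S\neq 1$, and finitely many conjugates of $L$ by elements of the socle; since conjugates of maximal subgroups are maximal, every intersection in sight is automatically a maximal-intersection. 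Moreover, the rigidity property it contradicts is not pentagon-freeness but the one you mention only parenthetically: in a product of projective geometries every irredundant family of coatoms with trivial meet has the same cardinality. Choosing $s_1,\dots,s_m\in S$ minimal with $K\cap K^{s_1}\cap\dots\cap K^{s_m}=1$, the conjugates of $L$ by the diagonal elements $(s_i,\dots,s_i)$ yield an irredundant family of maximal subgroups with trivial intersection of size $t+1+m$, while the conjugates by $(s_i,1,\dots,1)$ and $(1,s_i,\dots,s_i)$ yield one of size $t+1+2m$ (this is where $n\geq 2$ enters); since $m\geq 1$ these sizes differ, a contradiction. To complete your argument you would either have to carry out the O'Nan--Scott case analysis in full --- with the diagonal and product-action types being genuinely delicate --- or replace your candidate subgroups by intersections of conjugates of a single explicitly constructed maximal subgroup, which is in effect the paper's proof.
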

\begin{proof}

There exists a finite nonabelian simple group $S$
such that $N=\soc(G)= S_1 \times \ldots \times S_n$, with $S_i\cong S$ for
$1\leq i \leq n$. Assume by contradiction $n\geq 2.$ 
 Let  $\psi$ be the map from  $N_L(S_1) $ to  $\aut(S)$ induced by
the conjugacy  action on $S_1$.
Set $H=\psi(N_G(S_1))$ and note that $H$ is an almost simple group with socle
$S=\inn(S)=\psi(S_1)$.
Let $T:=\{t_1,\ldots,t_n\}$
be a right transversal of $N_G(S_1)$ in $G;$
the map $$\phi_T: G \to
H \wr \sym(n)$$ given by
$$g \mapsto ( \psi(t_{1}^{} g t_{1 \pi}^{-1}), \dots ,  \psi(t_n^{} g t_{n
	\pi}^{-1})) \pi $$
where $\pi \in \sym (n)$ satisfies $t_i^{}g t_{i \pi}^{-1} \in
N_G(S_1)$ for all $1\leq i\leq  n$, is an injective
homomorphism.
So we may identify  $G$ with
its image in $H \wr \sym(n)$;  in this identification,  $N$ is contained in
the base subgroup $H^n$ and $S_i $ is a subgroup of the $i$-th
component of $H^n$.
Since $\mathcal M(X)\cong \mathcal M(X/\frat(X)),$ we may assume $\frat(X)=1.$ By Lemma \ref{normali}, $\frat(G/N)=1$ and so there exist $t$ maximal subgroups $M_1,\dots,M_t$ of $G$ such that
$$N=M_1\cap \dots \cap M_t < M_1\cap \dots \cap M_{t-1}<\dots < M_1\cap M_2 < M_1 <G.$$  
Let $R$ be a maximal subgroup of $H$ with $H=RS$ and set $K=R\cap S.$ It must be $K\neq 1$ (see for example the last paragraph of the proof of the main theorem in \cite{lps}). Notice that $L:=G \cap (R \wr \perm(n))$ is a maximal subgroup of $G$ (\cite{classes} Proposition 1.1.44).  We have $D:=L\cap M_1\cap \dots \cap M_t=L\cap N=K^n.$ Choose a subset $\{s_1,\dots,s_m\}$ of $S$ with minimal cardinality 
with respect to the property $K\cap K^{s_1}\cap \dots \cap K^{s_m}=1.$
Set 
$$\begin{aligned}
\alpha_1=&(s_1,\dots,s_1), \alpha_2=(s_2,\dots,s_2),\dots,\alpha_m=(s_m,\dots,s_m),\\
\beta_1=&(s_1,1,\dots,1), \beta_2=(s_2,1,\dots,1),\dots,\beta_m=(s_m,1,\dots,1),\\
\gamma_1=&(1,s_1,\dots,s_1), \gamma_2=(1,s_2,\dots,s_2),\dots,\gamma_m=(1,s_m,\dots,s_m).
\end{aligned}$$
For $1\leq i \leq m,$ set
$$\begin{aligned}
A_i:&=L^{\alpha_i}\cap \dots \cap L^{\alpha_m}\cap D,\\
B_i:&=L^{\beta_i}\cap \dots \cap L^{\beta_m}\cap L^{\gamma_1}\cap \dots \cap L^{\gamma_m}\cap D,\\
C_i:&=L^{\gamma_i}\cap \dots \cap L^{\gamma_m}\cap D.
\end{aligned}$$
We have 
$$1 = A_1 < \dots < A_m < D, \quad 1 = B_1 < \dots < B_m < C_1 < \dots < C_m < D.$$
In particular $$\{M_1,\dots,M_t,L,L^{\alpha_1},\dots,L^{\alpha_m}\}, \quad \{M_1,\dots,M_t,L,L^{\beta_1},\dots,L^{\beta_m},L^{\gamma_1},\dots,L^{\gamma_m}\}$$
	are two families of maximal subgroups of $G$ that are minimal with respect to the property that their intersection is the trivial subgroup. However the assumption $\mathcal M(G)\cong \mathcal M(X)$ implies that all the family of maximal subgroups of $G$ with this property must have the same size.
\end{proof}

\begin{lemma}\label{cinque}
	If $G$ is a finite almost simple group, then there exist $t\leq 5$ maximal subgroups of $G$ with the property that $M_1\cap \dots \cap M_t=1.$
\end{lemma}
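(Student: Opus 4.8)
The plan is to reduce the statement to a bound on base sizes and then handle the remaining ``large base'' cases by hand. First I would observe that an almost simple group $G$ with socle $S$ has $\frat(G)=1$ and that $S$ is the unique minimal normal subgroup, so every maximal subgroup $M$ of $G$ not containing $S$ satisfies $\core_G(M)=1$; such $M$ exist and give faithful primitive actions of $G$ on the coset space $G/M$. The key reformulation is that if this primitive action admits a base of size $b$ --- a set of $b$ points whose pointwise stabiliser is trivial --- then the $b$ point stabilisers are conjugates of $M$, hence maximal subgroups of $G$, with trivial intersection. Thus it suffices to find, for each $G$, a faithful primitive action with base size at most $5$; more generally, since there is no requirement that the $M_i$ be conjugate, I am free to combine maximal subgroups of different types, which will be essential in the hardest cases.

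Next I would invoke the classification of finite simple groups together with the known base-size results for almost simple primitive groups (Burness, Liebeck, Shalev and related work): for every almost simple $G$ and every non-standard primitive action the base size is bounded by a small absolute constant, and for sporadic and exceptional groups, and for classical groups in their non-subspace geometric actions, this constant is already $\le 5$. For these families one simply selects such an action and reads off at most five conjugate maximal subgroups with trivial intersection.

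The main obstacle is the ``standard'' actions, where base sizes grow without bound: the natural action of $\sym(n)$ and $\alt(n)$ on $n$ points (and on $k$-subsets for small $k$), and the subspace actions of the classical groups. Here conjugates of a single maximal subgroup cannot yield a uniform bound, so I would instead exploit the freedom to mix maximal subgroups of distinct Aschbacher and O'Nan--Scott types. For $\sym(n)$ and $\alt(n)$ I would pass to a primitive action on subsets or partitions with block size close to $n/2$, where the base size is small, or combine an intransitive, an imprimitive and a primitive maximal subgroup so that their intersection collapses to the identity. For the classical groups I would take a couple of subspace stabilisers in general position, reducing the intersection to a torus-like subgroup, and then intersect with one or two further geometric maximal subgroups (a field-extension subgroup, a subfield subgroup, or the stabiliser of a nondegenerate decomposition) chosen to annihilate the remaining elements.

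The delicate point, and the bulk of the work, is verifying that these constructions never require more than five subgroups across all families and all small cases; this is where the detailed structure of maximal subgroups supplied by the classification is used, and where a finite check of the low-rank and small-degree groups completes the uniform bound $t\le 5$.
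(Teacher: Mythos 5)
Your overall skeleton matches the paper's proof: reformulate via base sizes (a base of size $b$ in a faithful primitive action gives $b$ conjugate core-free maximal subgroups with trivial intersection), quote the Burness--Liebeck--Shalev/Burness results for non-standard actions, and treat separately the groups whose relevant actions are standard. The paper does exactly this, citing \cite[Theorem 1]{bu} for the generic case. But the two exceptional cases you identify are where the entire content of the lemma lies, and there your proposal has a genuine error plus an unexecuted plan. The error is the claim that for $\sym(n)$ and $\alt(n)$ one can use ``a primitive action on subsets or partitions with block size close to $n/2$, where the base size is small.'' This is false: if $A_1,\dots,A_c$ are subsets of $\{1,\dots,n\}$, the intersection of their setwise stabilizers contains the product of the symmetric groups on the atoms of the Boolean algebra generated by the $A_i$; there are at most $2^c$ atoms, so triviality forces $2^c\geq n$, i.e.\ $c\geq \log_2 n$. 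Hence base sizes for actions on $\lfloor n/2\rfloor$-subsets (equivalently, partitions into two halves) tend to infinity, and no bounded number of such stabilizers ever suffices. What is true, and what the paper uses, is that partitions into \emph{many} parts of bounded size have bounded base size; this is the content of the cited results \cite[Corollary 1.4, Corollary 1.5, Remark 1.6]{bcam} and \cite[Theorem 2]{gl}. Your fallback of mixing an intransitive, an imprimitive and a primitive maximal subgroup is plausible but is not an argument; making it work uniformly in $n$ is essentially what \cite{gl} proves.

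For the classical groups your plan is also not carried out, and its first step is already doubtful: two subspace stabilizers ``in general position'' do not cut down to a ``torus-like'' subgroup --- opposite parabolics intersect in a full Levi subgroup, and stabilizers of nondegenerate subspaces intersect in products of smaller classical groups --- so the claim that one or two further geometric subgroups finish the job needs real work. The paper's route here is different and cleaner: it observes that every almost simple classical group has a faithful primitive action with point stabilizer in one of the geometric Aschbacher classes $\mathscr{C}_2,\dots,\mathscr{C}_7$ (by the tables in \cite{kl}), hence a non-standard action covered by \cite{bu}, with the single exception $S=\Omega^+_{2p}(2)$, $p$ an odd prime; that family is then handled by the proof of \cite[Theorem 6.13]{bgl}, which supplies three maximal subgroups whose intersection meets $S$ trivially, giving $t\leq 4$ there. (A smaller inaccuracy of the same kind: for sporadic groups the non-standard bound is not uniformly $\leq 5$ for a fixed action --- e.g.\ $\M_{24}$ on $24$ points has base size $7$ --- so one must also argue that a suitable action can be chosen.) In short, the architecture of your argument is the paper's, but the cases you defer to ``verification'' are the lemma's actual substance, and your one concrete proposal for the alternating case would fail.
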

\begin{proof}
The result follows from  \cite[Theorem 1]{bu}, except when $S=\soc(G)$ is an alternating group or a classical group and all the  primitive actions of $X$ are of standard type.	
If $\soc(G)$ is of alternating type, then the result follows from \cite[Corollary 1.4, Corollary 1.5, Remark 1.6]{bcam} (see also \cite[Theorem 2]{gl} and its proof). In the case of classical groups, we can build up a non-standard action by taking primitive actions with stabilizer in one of the Aschbacher classes $\mathscr{C}_2$,  $\mathscr{C}_3$,  $\mathscr{C}_4$,  $\mathscr{C}_5$,  $\mathscr{C}_6$,  $\mathscr{C}_7$
(see \cite{kl} Tables 3.5.A. 3.5.B, 3.5.C, 3.5.D, 3.5.E and 3.5.F), except when  $S=\Omega^+_{2p}(2)$ and  $p$ is an odd prime.	In this case, $|G:S|\leq 2$. Let $\tilde V$ be
the natural module for $G$ and let 
be the set of nondegenerate plus-type subspaces of
dimension $k+1.$ Then $G$ acts primitively on this set and by the proof of \cite[Theorem 6.13]{bgl} it contains three maximal subgroups $M_1$, $M_2$, $M_3$ such that $M_1\cap M_2\cap M_3\cap S=1$. so $t\leq 4.$
\end{proof}

\begin{lemma}\label{mono}
If $G$ is a finite monolithic primitive group with non-abelian socle, then there is no finite nilpotent group $X$ with $\mathcal M(G) \cong \mathcal M(X).$
\end{lemma}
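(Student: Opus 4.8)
The plan is to derive a contradiction from the assumption that a finite nilpotent group $X$ exists with $\mathcal M(G)\cong \mathcal M(X)$, by combining Lemma \ref{as} with a counting argument on the minimal families of maximal subgroups whose intersection is trivial. First I would observe that, since $\mathcal M(X)\cong \mathcal M(X/\frat(X))$, we may assume $\frat(X)=1$, and correspondingly by Lemma \ref{normali} the socle $N=\soc(G)$ is a maximal-intersection in $G$ with $\mu_G(N)\neq 0$. By Lemma \ref{as}, $G$ is almost simple, say with socle $S$. The strategy is then to show that the \emph{size} of a minimal family of maximal subgroups meeting in the identity is not an invariant that a nilpotent group can match once we also account for the chief-series structure forced by nilpotency.

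The key computational input is the following. Because $\frat(X)=1$ and $X$ is nilpotent, $X\cong C_{p_1}^{m_1}\times\dots\times C_{p_r}^{m_r}$, so by Corollary \ref{monil} the M\"obius value $\mu_X(1)=\prod_i(-1)^{m_i}p_i^{\binom{m_i}{2}}$ is nonzero; matching $\mathcal M(G)\cong\mathcal M(X)$ forces $\mu_G(1)=\mu_X(1)\neq 0$, so $1$ is a maximal-intersection in $G$. Next I would extract two numerical invariants of $\mathcal M(X)$ that are preserved under the lattice isomorphism: the common length of a maximal chain from $1$ to $X$, which equals $m_1+\dots+m_r=\Omega(|X|)$, and the minimal number of maximal subgroups whose intersection is trivial, which for the elementary-abelian-type lattice of $X$ equals $\max_i m_i$ (the number of hyperplanes needed to cut out the trivial subgroup is governed by the largest Sylow rank). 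The point is that in $X$ these two quantities can be very different, whereas in an almost simple group they are tightly constrained: Lemma \ref{cinque} gives that $G$ has $t\le 5$ maximal subgroups with trivial intersection, so the minimal such family in $G$ has size at most $5$, forcing $\max_i m_i\le 5$ and hence $\Omega(|X|)\le 5r$.

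From here I would push on the structure of $S$. Since $\mu_G(1)\neq 0$ and $1$ is a maximal-intersection, every maximal chain in the subgroup lattice that realizes intersections of maximals descends through $N=S$, and the sublattice below $N$ must match the corresponding sublattice of $X$ below a subgroup of the same height. But $S$ is a nonabelian simple group, so the interval $[1,S]$ in $\mathcal M(G)$ is the maximal-intersection lattice of $S$ itself, which has maximal chains of length equal to the maximal length of a chain of subgroups obtainable as intersections of maximal subgroups of $S$ — a quantity that, for nonabelian simple $S$, is strictly larger than the bound $\max_i m_i\le 5$ permits once one checks that the nilpotent lattice is \emph{modular and self-dual of a very rigid type}. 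Concretely, the lattice $\mathcal M(X)$ of a Frattini-free nilpotent group is a product of lattices of subspaces of vector spaces, hence it is relatively complemented; I would verify that $\mathcal M(G)$ for $G$ almost simple fails to be relatively complemented (equivalently, $S$ has a maximal-intersection $H\neq 1$ with no complement in the interval $[1,S]$), giving the contradiction.

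The hard part will be the final lattice-theoretic incompatibility: I expect the cleanest route is to avoid a full case analysis over simple groups and instead isolate a single structural invariant of $\mathcal M(X)$ — relative complementation, equivalently the property that every interval is itself the maximal-intersection lattice of a Frattini-free nilpotent group — and show it fails for almost simple $G$. The obstacle is that proving $\mathcal M(S)$ is not relatively complemented for \emph{every} nonabelian simple $S$ may itself require the classification (already invoked via Lemmas \ref{as} and \ref{cinque}); the fallback, which is surely what the authors intend, is to use Lemma \ref{cinque} to bound the size of a trivial-intersection family by $5$ and then produce, in $X$, a second minimal trivial-intersection family of a \emph{different} size (as in the end of the proof of Lemma \ref{as}), contradicting the invariance of that size under $\mathcal M(G)\cong\mathcal M(X)$. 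I would therefore organize the proof around exhibiting such a size discrepancy, using that in a nonabelian simple socle the number $\Omega(|S|)$ of prime factors of the relevant chief-type data is incompatible with $\max_i m_i\le 5$.
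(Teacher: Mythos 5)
Your setup matches the paper's: reduce to $\frat(X)=1$, apply Lemma \ref{as} to get $G$ almost simple, note $\mu_G(1)=\mu_X(1)\neq 0$, and invoke Lemma \ref{cinque}. But there is a concrete error in the counting step. For a Frattini-free nilpotent group $X\cong C_{p_1}^{m_1}\times\dots\times C_{p_r}^{m_r}$, every maximal subgroup is normal of prime index and is full in all but one Sylow factor; hence a family of maximal subgroups with trivial intersection must contain at least $m_i$ members cutting the $i$-th factor, for \emph{every} $i$, and the minimal size of such a family is $\sum_i m_i=\Omega(|X|)$, not $\max_i m_i$. Your weaker (and incorrect) bound only yields $\Omega(|X|)\le 5r$ with $r$ unbounded, which is useless; the correct count combined with Lemma \ref{cinque} gives $\Omega(|X|)\le 5$ outright, which is exactly the paper's step and is what the rest of the argument needs.

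More seriously, neither of your proposed closing arguments can be completed. The relative-complementation route would require proving that $\mathcal M(G)$ fails to be relatively complemented for \emph{every} almost simple $G$ --- an unproven claim that is plausibly as hard as the lemma itself. The fallback --- exhibiting in $X$ two inclusion-minimal trivial-intersection families of maximal subgroups of different sizes --- is impossible: in a Frattini-free nilpotent group all such minimal families have the same size $\Omega(|X|)$ (this invariance is precisely what Lemma \ref{as} exploits, and the construction there of two families of different sizes requires $n\ge 2$ socle factors, which is no longer available once $G$ is almost simple). The missing engine is arithmetic, not lattice-theoretic. From $\Omega(|X|)\le 5$ and Corollary \ref{monil}, at most two exponents $m_i$ can be $\ge 2$, so $\mu_G(1)=\mu_X(1)$ is divisible by at most two distinct primes. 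Then the theorem of Hawkes--Isaacs--\"Ozaydin (\cite[Theorem 4.5]{isa}) gives that $|G|$ divides $m\cdot\mu_G(1)$, where $m$ is the square-free part of $|G/G'|$; since $S=\soc(G)\le G'$, $m$ divides $|G/S|$, whence $|S|$ divides $\mu_G(1)$. Thus $|S|$ has at most two prime divisors, so $S$ is soluble by Burnside's $p^aq^b$-theorem, a contradiction. Without this divisibility theorem (or an equivalent), your proposal has no way to pass from the bound on $\Omega(|X|)$ to a contradiction with the simplicity of $S$.
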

\begin{proof}
Assume, by contradiction, that there exists a finite nilpotent group $X$ with $\mathcal M(X) \cong \mathcal M(G).$ By Lemma \ref{as}, $G$ is a finite almost simple group. Moreover, as in the proofs on the previous Lemmas, we may assume $\frat(X)=1$ and consequently $0\neq \mu_X(1)=\mu_G(1).$ By Lemma \ref{cinque}, $G$ contains $t\leq 5$ maximal subgroups with trivial intersection. But then $X$ satisfies the same properties, and consequently $|X|$ is the product of at most $t\leq 5$ primes. It follows from Corollary \ref{monil} that $\mu_X(1)=\mu_G(1)$ is divisible by at most two different primes. By \cite[Theorem 4.5]{isa}, $|G|$ divides $m\cdot \mu_G(1),$ where $m$ is the square-free part of $|G/G^\prime|.$ So, if $S=\soc(G),$ then, since $S\leq G^\prime$, $m$ divides $|G/S|$ and consequently $|S|$ divides $\mu_G(1)=\mu_X(1).$ But then $|S|$ is divisible by at most two different primes, so it is soluble by the Burnside's $p^aq^b$-theorem, a contradiction.
\end{proof}

\begin{proof}[Proof of Theorem \ref{main}] We prove our statement by induction on the order of $G$. If $\frat(G)\neq 1,$ then $\mathcal M(G/\frat(G))\cong \mathcal M(X/\frat(X)),$ so $G/\frat(G)$ is supersoluble by induction. But this implies that $G$ itself is supersoluble. So we may assume $\frat(G)=1.$ 
Assume, by contradiction, that $G$ is not soluble. Then there exists a non-abelian chief factor $R/S$ of $G.$ Let $L=G/C_G(R/S).$	Notice that $L$ is a primitive monolithic group whose socle is isomorphic to $R/S.$ 
By Lemma \ref{normali}, $C_G(R/S)$ is a maximal-intersection in $G$. But then $\mathcal M(L)\cong \mathcal M(X/Y)$ for a suitable normal subgroup $Y$ of $X$, in contradiction with Lemma \ref{mono}.
So we may assume that $G$ is soluble. Assume by contradiction that $G$ is not supersoluble.
Let $1=N_0<N_1<\dots <N=G$ be a chief series of $G$ and let $j$ be the largest positive integer with the property that the chief factor $N_j/N_{j-1}$ is not cyclic. Let $V=N_j/N_{j-1}$ and $H=G/C_G(V).$ By Lemma \ref{normali}, $N_j/N_{j-1}$ is a complemented chief factor of $G$, so there exists a normal subgroup $M$ of $G$ with $G/M\cong V\rtimes H.$
Again by Lemma \ref{normali}, $M$ is a maximal-intersection in $G$, so there exists $Y\leq X$ such that $\mathcal M(G/M)\cong \mathcal M(X/Y).$ By our choice of the index $j$, the factor group $G/N_j$ is supersoluble. Since $N_j\leq C_G(V),$ also $H$ is supersoluble. But then
it follows from Lemma \ref{super} that $V$ is cyclic of prime order, in contradiction with our assumption.
\end{proof}

\section{Frattini-free groups in $\mathfrak D$ and $\mathfrak M$}\label{last}



\begin{proof}[Proof of Proposition \ref{ddd}]
Assume that $X$ is  a finite nilpotent group with $\Delta(X)\cong \Delta(G)$. Since $\frat(G)=1,$ the unique isolated vertex in $\Delta(G)$ is the one corresponding to the identity subgroup. The same must be true in $\Delta(X)$ and therefore $\frat(X)=1.$ Hence $X$ is a direct product of elementary abelian groups. In particular every subgroup of $X$ is a maximal-intersection in $X$, so the lattice $\mathcal M(X)$ coincides with the entire subgroup lattice $\mathcal L(X)$ of $X$. This is equivalent to say that if $Y_1$ and $Y_2$ are different subgroups of $G$, then $\mathcal N_G(Y_1)\neq  \mathcal N_G(Y_2)$. Again, the same property holds for $\Delta(G)$ and consequently $\mathcal M(G)\cong \mathcal L(G).$ So by Proposition \ref{equiv}, $\mathcal L(G)\cong \mathcal L(X)$ and  the conclusion follows from 
\cite[Theorem 2.5.10]{sch}.
\end{proof}

\begin{lemma}\label{mdp}Suppose that $X_1$ and $X_2$ are finite groups. If no simple group is a homomorphic image of both $X_1$ and $X_2$ then $\mathcal M(X_1\times X_2) \cong \mathcal M(X_1) \times \mathcal M(X_2).$
\end{lemma}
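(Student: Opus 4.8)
The plan is to establish the isomorphism $\mathcal M(X_1\times X_2)\cong\mathcal M(X_1)\times\mathcal M(X_2)$ by exploiting the hypothesis that $X_1$ and $X_2$ share no common simple homomorphic image. The key structural fact I would use is that under this hypothesis every maximal subgroup of $X_1\times X_2$ has a product form: a maximal subgroup $M$ either contains $X_2$ (so $M=M_1\times X_2$ for a maximal $M_1\leq X_1$) or contains $X_1$ (so $M=X_1\times M_2$). Indeed, a maximal subgroup $M$ of a direct product $X_1\times X_2$ corresponds, via Goursat's lemma, to a common section of $X_1$ and $X_2$, and when it is \emph{not} of product type the associated common section is a nontrivial simple group arising as a quotient in both factors. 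Since no simple group is a homomorphic image of both $X_1$ and $X_2$, this ``diagonal'' case cannot occur, and hence every maximal subgroup of $X_1\times X_2$ is of the form $M_1\times X_2$ or $X_1\times M_2$.

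Granting this, I would build the candidate isomorphism $\Phi\colon \mathcal M(X_1)\times\mathcal M(X_2)\to\mathcal M(X_1\times X_2)$ by $\Phi(H_1,H_2)=H_1\times H_2$ and verify it is a well-defined lattice isomorphism. First I would check that $\Phi$ lands in $\mathcal M(X_1\times X_2)$: if $H_i$ is a maximal-intersection in $X_i$, say $H_i=\bigcap_j M_{i,j}$ with each $M_{i,j}$ maximal in $X_i$, then $H_1\times H_2=\bigl(\bigcap_j(M_{1,j}\times X_2)\bigr)\cap\bigl(\bigcap_k(X_1\times M_{2,k})\bigr)$ is an intersection of maximal subgroups of $X_1\times X_2$, using the product form just established. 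Conversely, given any maximal-intersection $H$ of $X_1\times X_2$, writing $H$ as an intersection of maximal subgroups and grouping them by which factor they contain shows $H=H_1\times H_2$ where $H_1$ is an intersection of maximal subgroups of $X_1$ and $H_2$ one of $X_2$; this proves $\Phi$ is surjective. Injectivity is immediate from $(H_1\times H_2)\cap(X_1\times 1)=H_1\times 1$. Finally, since meets in all three lattices are ordinary intersections and direct-product intersection computes componentwise, $\Phi$ preserves meets and the order relation, hence is a lattice isomorphism (a meet-preserving order-isomorphism of lattices preserves joins automatically).

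The main obstacle is the product-form claim for maximal subgroups, since this is where the coprimality-type hypothesis on simple images does the real work. I would handle it carefully via Goursat's correspondence: a subgroup $M$ of $X_1\times X_2$ is determined by subgroups $A_i\trianglelefteq B_i\leq X_i$ together with an isomorphism $B_1/A_1\cong B_2/A_2$, and $M$ is maximal precisely when this common quotient is simple and $B_i=X_i$ for the factor(s) not fully contained. A nondiagonal maximal subgroup forces the common simple quotient $B_1/A_1\cong B_2/A_2$ to be a simple homomorphic image of both $X_1$ and $X_2$, which the hypothesis forbids; thus maximality forces the degenerate cases $B_1/A_1$ trivial, i.e.\ $M$ contains one factor entirely. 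Once this is pinned down the rest is bookkeeping, so I would be careful to state the Goursat description precisely and to treat the trivial-quotient degenerations correctly, as that is the only genuinely delicate point.
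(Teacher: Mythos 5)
Your proposal is correct and follows essentially the same route as the paper: the paper classifies the maximal subgroups of $X_1\times X_2$ into standard (product) type and diagonal type by citing Suzuki, Group Theory I, Chap.~2, (4.19) — which is exactly the content of your Goursat argument — observes that the hypothesis on common simple images rules out diagonal type, and concludes that the elements of $\mathcal M(X_1\times X_2)$ are precisely the products $K_1\times K_2$ with $K_i\in \mathcal M(X_i)$. Your derivation of the dichotomy from Goursat's lemma and your explicit verification that $(H_1,H_2)\mapsto H_1\times H_2$ is a lattice isomorphism merely make self-contained what the paper cites or leaves implicit.
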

\begin{proof}
A maximal subgroup $M$ of a direct product $X_1\times X_2$ is of standard type if either $M=Y_1 \times X_2$ with
	$Y_1$ a maximal subgroup of $X_1$ or $M=X_1 \times Y_2$ with
	$Y_2$ a maximal subgroup of $X_2$.
	A maximal subgroup $M$ of $X_1\times X_2$ is of {diagonal type} if there exist
	a maximal normal subgroup $N_1$ of $X_1$, a maximal normal subgroup $N_2$ of $X_2$ and an isomorphism
	$\phi: X_1/N_1 \to X_2/N_2$ such that $M=\{(x_1,x_2)\in H_1\times H_2 \mid \phi(x_1N_1)=x_2N_2\}.$
	By \cite[Chap. 2, (4.19)]{suzuki}, a maximal subgroup of $X_1\times X_2$ is either of standard type or of diagonal type. If no simple group is a homomorphic image of both $X_1$ and $X_2$ then all the maximal subgroups of $X_1\times X_2$ are of standard type. In particular $K\in \mathcal M(X_1\times X_2)$ if and only if $K=K_1\times K_2,$ with $K_1\in \mathcal M(X_1)$ and $K_2\in \mathcal M(X_2).$ 
\end{proof}
\begin{lemma}\label{inverso}The following hold:
	\begin{enumerate}
\item If $G\in \Lambda(p_1,\dots,p_t)$, then $\mathcal M(G)\cong \mathcal M(C_{p_1}^2\times \dots \times  C_{p_{t-1}}^2).$
\item If $G\in \Lambda^*(p_1,\dots,p_t)$ 
then $\mathcal M(G)\cong \mathcal M(C_{p_1}^2\times \dots \times  C_{p_{t-1}}^2\times C_{p_t}).$ 
\end{enumerate}
\end{lemma}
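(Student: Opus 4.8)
The plan is to compute $\mathcal M(G)$ as a direct product of the maximal-intersection lattices of the individual factors, and then to identify the lattice of each factor with that of an elementary abelian group by invoking the classical lattice-isomorphism between nonabelian $P$-groups and elementary abelian groups. Two ingredients drive the argument: Lemma \ref{mdp}, which splits $\mathcal M$ over a direct product whose factors have no common simple quotient, and the fact that a nonabelian $P$-group $C_p^n\rtimes C_q$ is lattice-isomorphic to the elementary abelian group $C_p^{n+1}$ (see \cite{sch}). I record here that the right-hand exponents are in general $n_i+1$, which equals $2$ exactly in the case $n_i=1$.

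First I would record the simple quotients of the factors. Writing $H_i\cong C_{p_i}^{n_i}\rtimes C_{p_{i+1}}$, the power automorphism induced by $C_{p_{i+1}}$ has no nonzero fixed point, so $[A_i,C_{p_{i+1}}]=A_i$ for $A_i=C_{p_i}^{n_i}$; hence $H_i'=A_i$, the subgroup $A_i$ is the unique maximal normal subgroup of $H_i$, and $C_{p_{i+1}}$ is the unique simple quotient of $H_i$. Since $p_{i+1}\mid p_i-1$ forces $p_1>p_2>\dots>p_t$, these simple quotients are pairwise distinct, and distinct from the simple quotient $C_{p_1}$ of the extra factor $C_{p_1}$ occurring in the $\Lambda^*$ case. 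Consequently no simple group is a common homomorphic image of two of the factors, and applying Lemma \ref{mdp} iteratively, using that the simple quotients of a direct product are the union of those of the factors, yields
$$\mathcal M(H_1\times\dots\times H_{t-1})\cong \mathcal M(H_1)\times\dots\times\mathcal M(H_{t-1}),$$
with an additional factor $\mathcal M(C_{p_1})$ in the $\Lambda^*$ case.

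Next I would treat a single factor. By the $P$-group lattice-isomorphism, $\mathcal L(H_i)\cong\mathcal L(C_{p_i}^{n_i+1})$. The key observation is that being a maximal-intersection is a purely lattice-theoretic condition: a subgroup lies in $\mathcal M(G)$ precisely when it is a meet of coatoms of $\mathcal L(G)$, with $G$ itself arising as the empty meet. A lattice isomorphism carries coatoms to coatoms and meets to meets, so it restricts to an isomorphism $\mathcal M(H_i)\cong\mathcal M(C_{p_i}^{n_i+1})$; and since every subgroup of an elementary abelian group is an intersection of hyperplanes, $\mathcal M(C_{p_i}^{n_i+1})=\mathcal L(C_{p_i}^{n_i+1})$. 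This gives $\mathcal M(H_i)\cong\mathcal L(C_{p_i}^{n_i+1})$, the maximal-intersection lattice of the $i$-th elementary abelian factor on the right-hand side. Note that the prime $p_{i+1}$ disappears from $\mathcal M(H_i)$ at this step.

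Finally I would reassemble. For (1), since $p_1,\dots,p_{t-1}$ are distinct primes the elementary abelian target again splits under Lemma \ref{mdp}, so $\prod_i\mathcal L(C_{p_i}^{n_i+1})\cong\mathcal M(C_{p_1}^{n_1+1}\times\dots\times C_{p_{t-1}}^{n_{t-1}+1})$, which is the claim. For (2) the only new point, and the one I expect to be the crux, is the role of the extra direct factor $C_{p_1}$: it contributes the two-element chain $\mathcal M(C_{p_1})$, which is isomorphic \emph{as a lattice} to $\mathcal M(C_{p_t})$ even though the primes differ. Replacing $\mathcal M(C_{p_1})$ by $\mathcal M(C_{p_t})$ and splitting the target over its now pairwise distinct primes yields $\mathcal M(G)\cong\mathcal M(C_{p_1}^{n_1+1}\times\dots\times C_{p_{t-1}}^{n_{t-1}+1}\times C_{p_t})$. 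The main obstacle is not any single hard step but the bookkeeping of which primes survive in $\mathcal M$: the prime $p_t$ vanishes from each $\mathcal M(H_i)$ through the $P$-group isomorphism, so in (2) it must be reintroduced through the cyclic factor, and the proof works precisely because the two-element chain coming from $C_{p_1}$ matches the one coming from $C_{p_t}$.
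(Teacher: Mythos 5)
Your proposal is correct and follows essentially the same route as the paper's proof: Schmidt's lattice isomorphism between a nonabelian $P$-group $C_p^n\rtimes C_q$ and $C_p^{n+1}$, Lemma \ref{mdp} to split and reassemble $\mathcal M$ over the direct factors, and in case (2) the observation that the two-element chains $\mathcal M(C_{p_1})$ and $\mathcal M(C_{p_t})$ are isomorphic. Your additional checks (that each $H_i$ has $C_{p_{i+1}}$ as its unique simple quotient, and that $\mathcal M$ is preserved by any lattice isomorphism since it consists of the meets of coatoms) are details the paper leaves implicit, not a different argument.
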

\begin{proof}
Let $H\cong C_{p}^n\rtimes C_q$ be a nonabelian $P$-group.  By \cite[Theorem 2.2.3]{sch}, the subgroup lattices of $H$ and $C_p^{n+1}$ are isomorphic, and consequently $\mathcal M(H)\cong \mathcal M(C_p^{n+1}).$
 Now assume  $G=H_1\times \dots \times H_{t-1}\in \Lambda(p_1,\dots,p_t)$, with $H_i\cong C_{p_i}^{n_i}\rtimes C_{p_i+1}.$ By Lemma \ref{mdp}, $$\begin{aligned}\mathcal M(G)&\cong\mathcal M(H_1\times\dots\times H_{t-1})
\cong \mathcal M(H_1)\times \dots \times \mathcal M(H_{t-1})\\
&\cong \mathcal M(C_{p_1}^{n_1+1})\times \dots \times \mathcal M(C_{p_{t-1}}^{n_{t-1}+1})\!\cong\! \mathcal M(C_{p_1}^{n_1+1}\times \dots \times  C_{p_{t-1}}^{n_{t-1}+1}).\end{aligned}$$ This proves (1). 
If $G=H_1\times \dots \times H_{t-1}\times C_{p_1}\in \Lambda^*(p_1,\dots,p_t)$ 
with $H_i\cong C_{p_i}^{n_i}\rtimes C_{p_i+1},$ then, again by Lemma \ref{mdp}, 
$$\begin{aligned}\mathcal M(G)&\cong\mathcal M(H_1\times\dots\times H_{t-1}\times C_{p_1})\\&
\cong \mathcal M(H_1)\times \dots \times \mathcal M(H_{t-1})\times \mathcal M(C_{p_1})\\
&\cong \mathcal M(C_{p_1}^{n_1+1})\times \dots \times \mathcal M(C_{p_{t-1}}^{n_{t-1}+1})\times \mathcal M(C_{p_1})\\&\cong \mathcal M(C_{p_1}^{n_1+1})\times \dots \times \mathcal M(C_{p_{t-1}}^{n_{t-1}+1})\times \mathcal M(C_{p_t})\\&
\cong \mathcal M(C_{p_1}^{n_1+1}\times \dots \times  C_{p_{t-1}}^{n_{t-1}+1}\times C_{p_t}).\end{aligned}$$
So (2) is also proved.
\end{proof}

\begin{proof}[Proof of Proposition \ref{mmm}] First we prove by induction on the order of $G$ that if $G\in \mathfrak M,$ then $G$ is as described in the statement. Let $N$ be a minimal normal subgroup of $G$. By Theorem \ref{main}, there exists a prime $p$ such that $N\cong C_p$. Moreover, since $\frat(G)=1,$  $N$ has a complement, say $K$ in $G.$ By Lemma \ref{normali}, $K\cong G/N$ is a Frattini-free group belonging to $\mathfrak M$, so by induction $K=H_1\times \dots \times H_u$, where $H_1,\dots,H_u$ have coprime orders and are as described in the statement. 
	
	First assume that $N$ is central in $G$. If $p$ does not divide the order of $K,$ then $G=H_1\times \dots \times H_u \times N$ is a factorization with the required properties. Otherwise there exists a unique $i$ such that $p$ divides $|H_i|$. It is not restrictive to assume $i=u.$ If $H_u$ is either elementary abelian or $H_u \in \Lambda(p_1,\dots,p_t)$  with $p_1=p$, then we set
	$\tilde H_u=H_u\times C_p$ and the factorization $G=H_1\times \dots \times H_{u-1} \times \tilde H_u$ satisfies the required properties.
In the other cases, there exists a prime $q\neq p$ and a normal subgroup $L$ of $H_u$ such that $J=H_u/L$ is isomorphic either  to $C_q\rtimes C_p$  or to $(C_p\rtimes C_q)\times C_p$.
By Lemma \ref{normali}, $Y=N\times J \cong G/(H_1\times \dots \times H_{u-1}\times L)\in \mathfrak M.$ So there exists a Frattini-free nilpotent group $X$ with $\mathcal M(X) \cong \mathcal M(Y).$ If $J\cong C_q\rtimes C_p,$ then $\mu_X(1)=\mu_Y(1)=-p\cdot q$ and $|X|$ is the product of three primes, but this possibility is
 excluded by Corollary \ref{monil}. If $J\cong (C_p\rtimes C_q)\times C_p,$ then $\mu_X(1)=\mu_Y(1)=p^2,$ again in contradiction with 
 Corollary \ref{monil}.
 
 Now assume that $N$ is not central. By Lemma \ref{normali}, $\frat(G/C_G(N))=1$, so $G/C_G(N)\cong C_q$ where $q$ is a square-free positive integer. Moreover there exists a Frattini-free nilpotent group $X$ such that $\mathcal M(X)\cong \mathcal M(Y)$ with $Y=G/C_K(N) \cong C_p \rtimes C_q.$ The identity subgroup of $Y$ can be obtained as intersection of two conjugated subgroups of order $q$. This implies that $|X|$ is the product of two primes and consequently $\mathcal M(Y)\cong \mathcal M(X)$ cannot contain chains of length $\geq 2.$ But then $q$ is a prime. In particular there exists a unique $i$ such that $q
$ divides $|H_i|.$ It is not restrictive to assume $i=u.$ Notice that $C_q\cong H_u/C_{H_u}(N),$ so $q$ divides $|H_u/H_u^\prime|.$
We distinguish the different possibilities for $H_u.$
	If $H_u=C_q^t,$ then $t=1$ (and so $NH_u\in \Lambda(p,q)$), otherwise $Y\cong (C_p\rtimes C_q)\times C_q$ would be an epimorphic image of $G$ and consequently there would exist a nilpotent group $X$ whose order is the product of three primes such that $\mu_X(1)=\mu_X(Y)=-p\cdot q,$ in contradiction with Corollary \ref{monil}. Assume that $H_u\in  \Lambda(p_1,\dots,p_t),$ with $q=p_i.$ We deduce from the fact that $q$ divides $|H_u/H_u^\prime|$ that $i\geq 2.$
	Let $r=p_{i-1}$ and $R$  a non-central normal subgroup of $H_u$ with order $r$. There exists a subgroup $Q$ of $H_u$ which has order $q$ and does not centralize neither $N$ nor $R.$ The semidirect product
	$Y=(N\times R)\rtimes Q\cong (C_p\times C_r)\rtimes C_q$ is an epimorphic image of $G$ and consequently there  exists a nilpotent $X$ whose order is the product of three primes such that $\mu_X(1)=\mu_Y(1)$ is divisible by $p\cdot r.$ By Corollary \ref{monil}, this is possible only if $p=r, X\cong C_p^3$, $\mu_X(1)=-p^3$ and $N$ and $R$ are $Q$-isomorphic (and consequently $G$-isomorphic). But then $p$ divides $|H_u|$ and $NH_u\in \Lambda(p_1,\dots,p_t).$
	 Assume $H_u\in \Lambda^*(p_1,\dots,p_t).$
	 If $q\neq p_1,$ we may repeat the previous argument and conclude that $p$ divides $|H_u|$ and $NH_u\in \Lambda^*(p_1,\dots,p_t).$
	 If $q=p_1,$ then $NH_u\in \Lambda(p,p_1,\dots,p_t).$
	 We conclude that in any cases one of the following occurs:
	 \begin{enumerate}
	 	\item 
 $NH_u\in \Lambda(p,p_1,\dots,p_t),$ 
\item 	  $NH_u\in \Lambda(p_1,\dots,p_t),$
\item	 $NH_u\in \Lambda^*(p_1,\dots,p_t).$  
\end{enumerate}
 If $p$ does not divide $|H_1|\cdots |H_{u-1}|,$  then the factorization $H_1\times \dots H_{u-1} \times NH_u$ satisfies the requests of the statement. Otherwise  we may assume that $p$ divides $|H_1|.$
	 Notice that in this case $p$ does not divides $H_u$, so $NH_u\in \Lambda(p,p_1,\dots,p_t)$
	  If $H_1$ admit a non-central chief factor of order $p,$ then there exists a prime $r$ such that $Y=(C_p\rtimes C_q) \times (C_p\rtimes C_r)$ is an epimorphic image of $G.$ There would exist a nilpotent group $X$ with $\mu_X(1)=\mu_Y(1).$ However by Proposition \ref{moe}, $\mu_Y(1)=p^2\cdot q^\eta,$ with $\eta=1$ if $q=r,$ $\eta=0$ otherwise, while by Corollary \ref{monil}, $p$ cannot divide $\mu_X(1)$ with multiplicity equal to 2. The only possibility that remains to discuss is $H_1\cong C_p^t.$ If $t\geq 2,$ then $Y=(C_p\rtimes C_q)\times C_p^2$ is an epimorphic image of $G,$ and there would exist a nilpotent group $X$ with $\mu_X(1)=\mu_Y(G)=p^2,$ again in contradiction with Corollary \ref{monil}. But then $t=1$ and $H_1\times NH_u\in \Lambda^*(p_1,p_2,\dots,p_t)$ with $p_1=p.$ Setting $\tilde H_1=H_1\times (NH_u),$ we conclude that $\tilde H_1\times H_2\times \dots \times H_{u-1}$ is the factorization we are looking for.
	
	Conversely, assume that $G=H_1\times \dots \times H_u$ is a factorization with the properties described by the statement. By Lemma \ref{inverso}, for every $1\leq i\leq u,$ there exists a nilpotent group that that $\mathcal M(H_i)=\mathcal M(X_i)$ and  $\pi(X_i)=\pi(H_i).$ But then, by Lemma \ref{mdp}, $\mathcal M(G) \cong \mathcal M(H_1)\times \cdots\times \mathcal M(H_u) \cong \mathcal M(X_1)\times \cdots\times  \mathcal M(X_u).$
\end{proof}

\end{document}